\documentclass[a4paper,12pt]{article}
\usepackage{amsfonts}
\usepackage{amsmath}
\usepackage{amssymb}
\usepackage{amsthm}
\usepackage{graphicx}
\usepackage{tikz}
\usepackage[hidelinks]{hyperref}
\usepackage[inline]{enumitem}
\usepackage{subcaption}

\usetikzlibrary{arrows,shapes, positioning, matrix, patterns}

\numberwithin{equation}{section}

\newtheorem{prop}{Proposition}
\numberwithin{prop}{section}
\newtheorem{lem}{Lemma}
\numberwithin{lem}{section}

\numberwithin{coro}{section}
\newtheorem{teo}{Theorem}
\numberwithin{teo}{section}

\theoremstyle{definition}

\numberwithin{defix}{section}
\newenvironment{defi}
  {\pushQED{\qed}\defix}
  {\popQED\enddefix}

\numberwithin{exex}{section}
\newenvironment{exe}
  {\pushQED{\qed}\exex}
  {\popQED\endexex}

\numberwithin{obsx}{section}
\newenvironment{obs}
  {\pushQED{\qed}\obsx}
  {\popQED\endobsx}

\newcommand{\ran}{\textnormal{range}}
\newcommand{\spn}{\textnormal{span}}

\providecommand{\keywords}[1]{\textit{Keywords:  } #1}

\providecommand{\MSC}[1]{\textit{2010 Mathematics subject classification:  } #1}

\title{Synchrony Branching Lemma \\for Regular Networks}
\author{
Pedro Soares\thanks{Departamento de  Matem\'{a}tica,
Faculdade de Ci\^{e}ncias  da Universidade do Porto,
Centro de Matem\'{a}tica da Universidade do Porto
    (ptcsoares@fc.up.pt).\newline This work was funded by FCT (Portugal) through the PhD grant PD/BD/105728/2014 and partially supported by CMUP (UID/MAT/00144/ 2013), which is funded by FCT with national (MEC) and European structural funds (FEDER), under the partnership agreement PT2020.}
}
\date{\vspace{-13mm}}

\begin{document}
\maketitle

\begin{abstract}
Coupled cell systems are dynamical systems associated to a network and synchrony subspaces, given by balanced colorings of the network, are invariant subspaces for every coupled cell systems associated to that network. 
Golubitsky and Lauterbach (SIAM J. Applied Dynamical Systems, 8 (1) 2009, 40–-75) prove an analogue of the Equivariant Branching Lemma in the context of regular networks. 
We generalize this result proving the generic existence of steady-state bifurcation branches for regular networks with maximal synchrony. 
We also give necessary and sufficient conditions for the existence of steady-state bifurcation branches with some submaximal synchrony. Those conditions only depend on the network structure, but the lattice structure of the balanced colorings is not sufficient to decide which synchrony subspaces support a steady-state bifurcation branch.

\vspace{1em}
\hspace{-1.8em}
\keywords{Coupled cell systems; Steady-state bifurcations; Synchrony-breaking bifurcations.}

\vspace{1em}
\hspace{-1.8em}
\MSC{37G10; 34D06; 34C23}
\end{abstract}

\section{Introduction}

Coupled cell networks describe influences between cells and can be represented by graphs. 
A dynamical system that respects a network structure is called a coupled cell system associated to the network. 
In \cite{SGP03} and \cite{GST05}, the authors formalize the concepts of (coupled cell) network and coupled cell system. 
They also show that there exists an intrinsic relation between coupled cell systems and networks, proving that a polydiagonal subspace given by a coloring of the network is an invariant subspace for any coupled cell system if and only if the coloring is balanced. 
Here, a coloring is balanced if any two cells with the same color receive, for each color, the same number of inputs starting in cells with that color. 
And a polydiagonal subspace given by a balanced coloring is called a synchrony subspace.
Given a balanced coloring, they define the corresponding quotient network by merging cells with equal color. 
Moreover, the restriction of a coupled cell system to a synchrony subspace is a coupled cell system for the quotient.
We will focus on regular networks, where all cells and edges are identical, and each regular network can be represented by an adjacency matrix. 
The adjacency matrix of a quotient network is given by the restriction of the original network adjacency matrix to the corresponding synchrony subspace, \cite{ADGL09}.


Equivariant theory is the study of dynamical systems that commute with an action of a group in the phase space and isotropy subgroups are the subgroups that fix some point of the phase space, see e.g. \cite{F07}. 
For each isotropy subgroup, the set of fixed points forms an invariant subspace for every equivariant dynamical system and it is called the fixed point subspace. 
One goal of equivariant bifurcation theory is to characterize which isotropy subgroups support a bifurcation. 
The Equivariant Branching Lemma \cite{C81} is one of the first important results about the existence of symmetry-breaking steady-state bifurcation branches for isotropy subgroups that have one dimensional fixed point subspaces. 
Later this result was extended for isotropy subgroups that have odd dimensional fixed point subspaces, see e.g. \cite{LLH94, CL00}.
This topic is a large source of inspiration to the study of synchrony-breaking bifurcations on networks, where one of the key questions concerns the characterization of synchrony subspaces which support (generically) steady-state bifurcation branches.


A similar result to the first version of the Equivariant Branching Lemma for regular networks has been already stated, see \cite[Theorem 2.1]{WG05}, \cite[Theorem 6.3]{GL09} and \cite[Corollary 3.1.]{K09}, we call this result the Synchrony Branching Lemma.
The eigenvalues of the Jacobian of a coupled cell system associated to a regular network at a full synchronous solution are related to the eigenvalues of its adjacency matrix and this relation preserves multiplicities, \cite{LG06}.
So, we can use the eigenvalue structure of the network adjacency matrix to tabulate the possible local codimension-one (steady-state or Hopf) synchrony-breaking bifurcations that can occur for the coupled cell systems associated to a network.
We say that an eigenvalue of the adjacency matrix belongs to a balanced coloring, if it has an eigenvector in the synchrony subspace given by that coloring.
Fixing an eigenvalue, we say that a coloring is maximal if the eigenvalue belongs to that coloring and it does not belong to any lower dimensional synchrony subspace.
The Synchrony Branching Lemma states that every synchrony subspace given by a maximal coloring with a simple eigenvalue (algebraic multiplicity $1$) generic supports a bifurcation branch. 
In \cite{SG11}, the authors study the degeneracy of steady-state bifurcation problems for regular networks and simple eigenvalues. 
They give conditions on the network structure for the degeneracy of steady-state bifurcation problems and they also present examples of regular networks  that have generic highly degenerated steady-state bifurcation problems. 
In \cite{K09}, it is given a characterization of the synchrony subspaces which support a synchrony-breaking bifurcation using the lattice structure of balanced colorings, for regular networks that only have simple eigenvalues.


In this manuscript, we generalize the Synchrony Branching Lemma for semisimple eigenvalues (the algebraic and geometric multiplicity are equal). 
We prove that a synchrony subspace given by a maximal coloring generically supports a bifurcation branch, if the semisimple eigenvalue has odd multiplicity, \ref{prop:bifbraoddker}.
This follows from the application of the Lyapunov-Schmidt Reduction \cite{GS85} and a blow-up technique also used in equivariant bifurcation, see e.g. \cite{LLH94}. 
In the way, we prove that the degeneracy of a bifurcation problem associated to a semisimple eigenvalue only depends on the network structure, \ref{lem:degnetstr}. 
Next, we focus on semisimple eigenvalues with multiplicity $2$.
If a coloring is maximal and has even degeneracy, then its synchrony subspace supports a bifurcation branch, \ref{prop:bifbranchdim2max}.
We also give necessary and sufficient conditions for the existence of bifurcation branches on synchrony subspaces given by submaximal colorings,  i.e., the eigenvalue belongs to the submaximal coloring and it must have multiplicity $0$ or $1$ in any synchrony subspace strictly included in the synchrony subspace given by the submaximal coloring, \ref{prop:bifbranchdim2submax}.
Those conditions only depend on the network structure.
We give examples of networks where the previous results apply, including two networks that have the same synchrony lattice structure but do not have the same type of synchrony-breaking bifurcations, \ref{exe:latbalcoldegdet} and \ref{exe:latbalcoldegdet2}.
Despite we do not present an explicit network, we show how a network can have a semisimple eigenvalue with multiplicity $2$ and do not support a bifurcation branch, see \ref{exe:ker2deg3nobifbra}.


This text is organized as follows: in \ref{sec:regnet}, we review some concepts and results of networks, coupled cell systems and steady-state bifurcations on networks, focusing on regular networks. 
Choosing a semisimple eigenvalue of the adjacency matrix and considering a generic coupled cell system with a bifurcation condition associated to the eigenvalue,
in \ref{sec:sbl}, we apply the Lyapunov-Schmidt Reduction and a blow-up technique to the coupled cell system reducing the bifurcation problem to the problem of finding zeros of a vector field in a sphere. 
Next, we prove the existence of a bifurcation branch, if the eigenvalue has odd multiplicity, \ref{subsec:oddsbl}. 
Last, we study the existence of bifurcation branches, when the eigenvalue has multiplicity $2$, \ref{subsec:twosbl}.

\section{Settings}\label{sec:regnet}

In this section, we recall some facts about networks and coupled cell systems, following \cite{SGP03, GST05}, and steady-state bifurcations on coupled cell systems.

\subsection{Regular Networks}\label{subsec:regnet}

A \emph{directed graph} is a tuple $G=(C,E,s,t)$, where $c\in C$ is a cell and $e \in E$ is a directed edge from the source cell, $s(e)$, to the target cell, $t(e)$. We assume that the sets of cells and edges are finite.
The \emph{input set} of a cell $c$, $I(c)$, is the set of edges that target $c$.

\begin{defi}
A \emph{regular network} is a directed graph $N$ such that the cardinality of the input set of a cell is the same for all cells. The \emph{valency} $\vartheta$ of $N$ is the number of edges that target each cell. 
We denote the number of cells in $N$ by $|N|$. 
\end{defi}

See \ref{fig:regnet} for two examples of regular networks with valency $2$.

A regular network can be represented by its \emph{adjacency matrix} $A$, where $A$ is a $|N|\times |N|$ matrix and the entry $(A)_{c\:c'}$ is the number of edges from $c'$ to $c$.

\begin{figure}[h]
\center
\begin{subfigure}[t]{0.4\textwidth}
\center
\begin{tikzpicture}
\node (n1) [circle,draw]   {1};
\node (n3) [circle,draw] [right=of n1]  {3};
\node (n4) [circle,draw] [below=of n1]  {4};
\node (n2) [circle,draw]  [below=of n3] {2};

\draw[->, thick] (n4) to [bend right] (n1);
\draw[->, thick] (n4) to [bend right] (n2);
\draw[->, thick] (n2) to (n3);
\draw[->, thick] (n2) to (n4);

\draw[->, thick] (n4) to [bend left] (n1);
\draw[->, thick] (n4) to [bend left] (n2);
\draw[->, thick] (n3) to [loop right] (n3);
\draw[->, thick] (n3) to (n4);
\end{tikzpicture}
\caption{Network \#29 of \cite{K09}}
\label{fig:net29}
\end{subfigure}
\begin{subfigure}[t]{0.4\textwidth}
\centering
\begin{tikzpicture}
\node (n1) [circle,draw]   {1};
\node (n3) [circle,draw] [right=of n1]  {3};
\node (n4) [circle,draw] [below=of n1]  {4};
\node (n2) [circle,draw]  [below=of n3] {2};

\draw[->, thick] (n4) to [bend right] (n1);
\draw[->, thick] (n4) to [bend right] (n2);
\draw[->, thick] (n1) to (n3);
\draw[->, thick] (n1) to (n4);

\draw[->, thick] (n4) to [bend left] (n1);
\draw[->, thick] (n4) to [bend left] (n2);
\draw[->, thick] (n2) to (n3);
\draw[->, thick] (n2) to (n4);
\end{tikzpicture}
\caption{Network \#51 of \cite{K09}}
\label{fig:net51}
\end{subfigure}
\caption{Regular networks with valency $2$.}
\label{fig:regnet}
\end{figure}
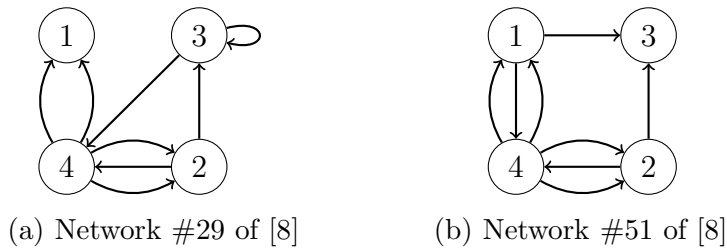

\begin{defi}
A \emph{coloring} of the cells of a network $N$ is an equivalence relation on the set of cells of $N$. The coloring is \emph{balanced} for $N$ if for any two cells of $N$ with the same color there is a bijection between the input sets of the two cells preserving the color of the source cells.
\end{defi}

Any regular network $N$ has two trivial balanced colorings: the \emph{full synchronous coloring}, $\bowtie_0$, and the \emph{full asynchronous coloring} $\bowtie_{=}$. The full synchronous coloring has only one class, i.e., $c\bowtie_0 c'$ for every cells $c,c'$ in $N$, and the full asynchronous coloring has $|N|$ classes, i.e., $c\bowtie_{=}c'$ if and only if $c=c'$.

Each balanced coloring defines a quotient network, \cite[Section 5]{GST05}. 
\begin{defi}
The \emph{quotient network} of a regular network $N$ with respect to a given balanced coloring $\bowtie$ is the network where the equivalence classes of the coloring, $[c]_{\bowtie}$, are the cells and there is an edge from $[c]_{\bowtie}$ to $[c']_{\bowtie}$, for each edge from a cell in the class $[c]_{\bowtie}$ to $c'$. We denote the quotient network by $N/\bowtie$. 
We also say that a network $L$ is a \emph{lift} of $N$, if $N$ is a quotient of $L$ with respect to some balanced coloring of $L$. 
\end{defi}

\begin{figure}[h]
\center
\begin{tikzpicture}
\node (n1) [circle,draw]   {1};
\node (n3) [circle,draw] [right=of n1]  {3};

\draw[->, thick] (n3) to [bend right] (n1);
\draw[->, thick] (n1) to (n3);

\draw[->, thick] (n3) to [bend left] (n1);
\draw[->, thick] (n3) to [loop right] (n3);
\end{tikzpicture}
\caption{Quotient network of the network in \ref{fig:net29} associated to the balanced coloring with classes $\{1,2\}$ and $\{3,4\}$.}
\label{fig:quonet29}
\end{figure}
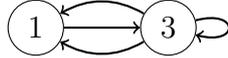

See \ref{fig:quonet29} for an example of a quotient network.

The set of balanced colorings forms a complete lattice, see \cite{S07,K09}. 
Denote by $\Lambda_N$ the set of balanced colorings for $N$. For every $\bowtie_1,\bowtie_2\in\Lambda_N$, we say that $\bowtie_1$ is a \emph{refinement} of $\bowtie_2$, and we write $\bowtie_1\prec \bowtie_2$, if $ \bowtie_1\neq \bowtie_2$ and $c\bowtie_1 d$ implies $c\bowtie_2 d$ for every cells $c,d$ of $N$. We denote by $\preceq$ the relation of refinement or equal. The pair $(\Lambda_N,\preceq)$ forms a lattice. 

Now, we introduce the definition of $\mu$-maximal and $\mu$-submaximal colorings, where $\mu$ is an eigenvalue of the network adjacency matrix.

\begin{defi}
Let $N$ be a regular network, $\bowtie$ a balanced coloring of $N$ and $\mu$ an eigenvalue of the adjacency matrix associated to $N$. We say that $\bowtie$ is a \emph{$\mu$-maximal coloring} if for every $\bowtie'$ such that $\bowtie\prec \bowtie'$ we have that $\mu$ is not an eigenvalue of the adjacency matrix associated to $N/\bowtie'$.
\end{defi}

\begin{defi}
Let $N$ be a regular network, $\bowtie$ a balanced coloring of $N$ and $\mu$ an eigenvalue of the adjacency matrix associated to $N$ with multiplicity $m>1$. We say that $\bowtie$ is a \emph{$\mu$-submaximal coloring of type $j$} if there are $j$ balanced colorings $\bowtie_1, \dots, \bowtie_j$ all distinct such that: $(i)$ $\bowtie\prec \bowtie_i$, $\bowtie_i\not\preceq \bowtie_{i'}$ and $\mu$ is an eigenvalue with  multiplicity $1$ of the adjacency matrix associated to $N/\bowtie_i$ for $i,i'=1,\dots,j$ with $i\neq i'$; $(ii)$ for any other balanced coloring $\bowtie'$ such that $\bowtie\prec \bowtie'$ we have that $\mu$ is not an eigenvalue of the adjacency matrix associated to $N/\bowtie'$ or $\bowtie_i \preceq \bowtie'$ for some $i=1,\dots,j$.
We say that $\bowtie_1, \dots, \bowtie_j$ are the \emph{$\mu$-simple components} of $\bowtie$.
\end{defi}

\begin{exe}\label{exe:latbalcol}
We consider the network \#51 of \cite{K09} (\ref{fig:net51}), which we denote by $N_{51}$ and it has the following adjacency matrix:
$$A_{51}=\left[\begin{matrix}
 0 & 0 & 0 & 2 \\
 0 & 0 & 0 & 2 \\
 1 & 1 & 0 & 0 \\
 1 & 1 & 0 & 0 
\end{matrix}\right].$$
The eigenvalues of $A_{51}$ are: the network valency $2$, $-2$ and $0$ with multiplicity $1$, $1$ and $2$, respectively. The network $N_{51}$ has four non-trivial balanced colorings $\bowtie_1=\{\{1,2\},\{3,4\}\}$, $\bowtie_2=\{\{3\},\{1,2,4\}\}$, $\bowtie_3=\{\{1,2\},\{3\},\{4\}\}$ and $\bowtie_4= \{\{1\},\{2\},\{3,4\}\}$.
 The balanced colorings of $N_{51}$ and the eigenvalues of the adjacency matrix corresponding to the quotient networks of $N_{51}$ associated to each balanced coloring are annotated in \ref{fig:latbalcoloreige51}.

The balanced coloring $\bowtie_0$ is $2$-maximal. The balanced coloring $\bowtie_1$ is $(-2)$-maximal. The balanced colorings $\bowtie_2$ and $\bowtie_4$ are $0$-maximal. And the balanced coloring $\bowtie_=$ is $0$-submaximal of type $2$ with $0$-simple components $\bowtie_3$ and $\bowtie_4$.
\end{exe}

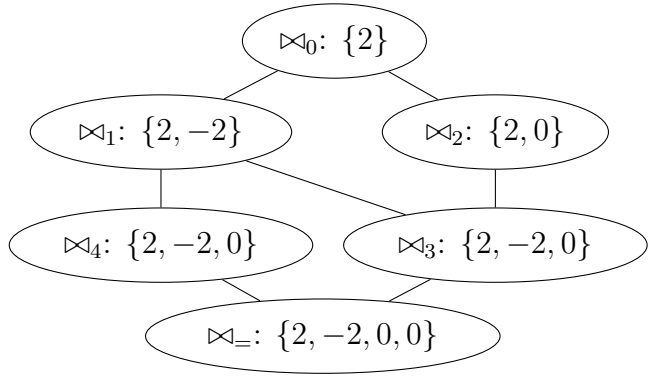
\begin{figure}[h]
\center
\begin{tikzpicture}[node distance=0.5cm and 0.7cm]
\node (n1) [ellipse,draw]  {$\bowtie_{0}$: $\{2\}$};
\node (n2) [ellipse,draw]  [below right=of n1, xshift=-0.5cm] {$\bowtie_2$: $\{2,0\}$};
\node (n3) [ellipse,draw]  [below left=of n1, xshift=0.5cm] {$\bowtie_1$: $\{2,-2\}$};
\node (n4) [ellipse,draw]  [below=of n2] {$\bowtie_3$: $\{2,-2,0\}$};
\node (n5) [ellipse,draw]  [below=of n3] {$\bowtie_4$: $\{2,-2,0\}$};
\node (n6) [ellipse,draw]  [below left=of n4, xshift=1.5cm] {$\bowtie_{=}$: $\{2,-2,0,0\}$};

\draw (n1) to  (n2);
\draw (n1) to  (n3);
\draw (n2) to  (n4);
\draw (n3) to  (n4);
\draw (n3) to  (n5);
\draw (n4) to  (n6);
\draw (n5) to  (n6);
\end{tikzpicture}
\caption{Balanced colorings of $N_{51}$ (\ref{fig:net51}) and the eigenvalues of the adjacency matrices associated to the corresponding quotient networks.}
\label{fig:latbalcoloreige51}
\end{figure}

\subsection{Coupled Cell Systems}\label{subsec:ccs}

In order to associate dynamics to a network, following \cite{SGP03, GST05}, we specify a phase space for the network and describe vector fields that are admissible for the network. 

Let $N$ be a regular network with valency $\vartheta$ and represented by the adjacency matrix $A$. We correspond to each cell $c$ a coordinate $x_c$ and assume that $x_c\in \mathbb{R}$. The \emph{network phase space} is the product of the phase space of the cells, i.e., $\mathbb{R}^{|N|}$.

A vector field $F:\mathbb{R}^{|N|}\rightarrow \mathbb{R}^{|N|}$  is \emph{admissible} for a regular network $N$ if:
\begin{enumerate}
\item The dynamics of cell $c$ depends only on its internal state and on the state of its input cells, $s(I(c))$. Thus there is a function $f:\mathbb{R}\times \mathbb{R}^{\vartheta}\rightarrow \mathbb{R}$ such that for every cell $c$  
$$(F(x))_c=f(x_c, x_{s(I(c))}),$$
where $x_{s(I(c))}=(x_{s(e)})_{e\in I(c)}$;
\item The state of the input cells have equal effect on the dynamics. That is, the function $f$ is $S_\vartheta$-invariant, where  $S_\vartheta$ is the group of permutations in $\{1,\dots,\vartheta\}$. For every $\sigma\in S_\vartheta$
$$f(\sigma(x_0,x_1,\dots,x_\vartheta))=f(x_0,x_1,\dots,x_\vartheta),$$
where $\sigma(x_0,x_1,\dots,x_\vartheta)=(x_0,x_{\sigma(1)},\dots,x_{\sigma(\vartheta)})$.
\end{enumerate}
A \emph{coupled cell system} associated to a regular network $N$ is a dynamical system defined by an admissible vector field $F:\mathbb{R}^{|N|}\rightarrow \mathbb{R}^{|N|}$
$$\dot{x}=F(x),\quad x\in \mathbb{R}^{|N|}.$$

Let $f:\mathbb{R}\times\mathbb{R}^\vartheta\rightarrow\mathbb{R}$ be a $S_\vartheta$-invariant function. We denote by $f^N$ the admissible vector field for $N$ defined by $f$ and given by the previous formulas. Observe that every admissible vector field for $N$ is equal to $f^N$ for some $S_\vartheta$-invariant function $f$.
We say that a function $f:\mathbb{R}\times\mathbb{R}^\vartheta\rightarrow \mathbb{R}$  is \emph{regular} if  $f$ is $S_\vartheta$-invariant and $(0,0,\dots,0)$ is an isolated zero of $f$.
 In this case $0\in \mathbb{R}^{|N|}$ is an equilibrium point of the coupled cell system defined by $f^N$.

For differentiable admissible vector fields $f^N$, its Jacobian at the origin can be represented in terms of the adjacency matrix of $N$, see \cite{LG06}.
We denote by $J_f^N$ the Jacobian of $f^{N}$ at the origin $0\in\mathbb{R}^{|N|}$. We have that
$$J^{N}_f=(D f^{N})_{0}=f_0 Id+f_1 A,$$
where $Id$ is the $|N|\times |N|$ identity matrix,
\begin{equation}\label{eq:f0f1}
f_0=\frac{\partial f}{\partial x_0}(0,0,\dots,0),\  f_1=\frac{\partial f}{\partial x_1}(0,0,\dots,0)=\dots=\frac{\partial f}{\partial x_\vartheta}(0,0,\dots,0).
\end{equation}

For every eigenvalue $\mu$ of $A$ with algebraic multiplicity $m_a$ and geometric multiplicity $m_g$, we have that $f_0+\mu f_1$ is an eigenvalue of $J_f^N$ with the same multiplicities $m_a$ and $m_g$. See \cite[Proposition 3.1]{LG06}. 
Moreover, the kernel of $J_f^N$ can be described using the eigenvectors of $A$. 
Denote by  $v_1,\dots,v_j$ a set of linear independent eigenvectors of $A$ and $\mu_1,\dots,\mu_j$ its corresponding eigenvalues, where $j$ is equal to the sum of all geometric multiplicities. Then
	$$\ker(J_f^N)=\{v: J_f^N v=0\}=\spn(\{v_i:f_0+\mu_i f_1=0\}),$$
	where $\spn$ denotes the linear subspace spanned by the vectors.

\begin{defi}
A \emph{polydiagonal subspace} of $\mathbb{R}^{|N|}$ is a subspace defined by the equality of certain cell coordinates. Let $\bowtie$ be a coloring in $N$. The polydiagonal subspace associated to $\bowtie$ is defined by
$$\Delta_{\bowtie}=\left\{x: x_c=x_d \Leftarrow c\bowtie d \right\}\subseteq \mathbb{R}^{|N|}.$$
Each polydiagonal subspace defines an unique coloring of the cells.
A subset $K\subseteq V$ is \emph{invariant} under a map $g:V\rightarrow V$, if $g(K)\subseteq K$.  A \emph{synchrony subspace} of a network is an invariant polydiagonal subspace for any vector field admissible for the network. 
\end{defi}

We have that the synchrony subspaces and balanced colorings are in one-to-one correspondence. 

\begin{teo}[{\cite[Theorem 4.3]{GST05}}]
Let $\bowtie$ be a coloring of cells in a network $N$. 
Then $\Delta_{\bowtie}$ is a synchrony subspace of $N$ if and only if $\bowtie$ is balanced.
\end{teo}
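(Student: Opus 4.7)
The plan is to establish the two implications separately, treating the balanced condition as the precise combinatorial translation of invariance under every admissible vector field.

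For the direction ``balanced implies synchrony subspace'', I would pick an arbitrary admissible $F=f^N$ and $x\in\Delta_\bowtie$, and verify that $(F(x))_c=(F(x))_d$ whenever $c\bowtie d$. The balanced condition supplies a bijection $\beta:I(c)\to I(d)$ with $s(e)\bowtie s(\beta(e))$ for every $e\in I(c)$. Since $x\in\Delta_\bowtie$, we have $x_{s(e)}=x_{s(\beta(e))}$ for every $e\in I(c)$, so the tuple $x_{s(I(d))}$ is a permutation of $x_{s(I(c))}$. Combined with $x_c=x_d$ and the $S_\vartheta$-invariance of $f$, the two evaluations of $f$ coincide, so $F(x)\in\Delta_\bowtie$.

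For the converse, my idea is to exhibit a single admissible vector field that detects any failure of balance. The natural witness is the linear map $f(x_0,x_1,\dots,x_\vartheta)=x_1+\cdots+x_\vartheta$, which is $S_\vartheta$-invariant and yields $(f^N(x))_c=\sum_{e\in I(c)}x_{s(e)}$. Suppose $\bowtie$ is not balanced; then there exist cells $c\bowtie d$ and a color class $K$ with $n_{c,K}\ne n_{d,K}$, where $n_{c,K}$ counts the edges of $I(c)$ sourced in $K$. Assigning each color class $[e']_\bowtie$ a real value $a_{[e']}$ produces a point $x\in\Delta_\bowtie$ with
\[
(f^N(x))_c-(f^N(x))_d=\sum_{[e']}\bigl(n_{c,[e']}-n_{d,[e']}\bigr)\,a_{[e']},
\]
a nontrivial linear functional of the $a_{[e']}$'s. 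Choosing them $\mathbb{Q}$-linearly independent forces this difference to be nonzero, so $f^N(x)\notin\Delta_\bowtie$, contradicting the assumed invariance of $\Delta_\bowtie$.

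The main obstacle lies in the converse: one must turn the combinatorial invariant ``input multiset of colors at cell $c$'' into a quantity that some admissible vector field actually distinguishes, using only the very restrictive admissibility constraints. The linear test function above suffices because it assembles, coordinate by coordinate, the multiset of input colors weighted by the chosen color values, and a generic choice of those values separates any two cells whose input color counts differ. The forward direction is then a bookkeeping exercise once the bijection supplied by the balanced condition has been used to align entries of the input tuples.
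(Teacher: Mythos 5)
Your proof is correct. The paper does not prove this statement itself (it is imported as \cite[Theorem 4.3]{GST05}), but your argument is the standard one specialized to regular networks with one-dimensional cell phase spaces: the forward direction combines the color-preserving bijection of input sets with the $S_\vartheta$-invariance of $f$, and the converse uses the admissible linear field $(f^N(x))_c=\sum_{e\in I(c)}x_{s(e)}$ (the adjacency matrix) at a point of $\Delta_{\bowtie}$ with generic class values. Taking the class values $\mathbb{Q}$-linearly independent is heavier than necessary --- setting the value $1$ on the offending color class and $0$ elsewhere already makes the difference equal to $n_{c,K}-n_{d,K}\neq 0$ --- but it is perfectly valid.
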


The restriction of an admissible vector field to a synchrony subspace $\Delta_{\bowtie}$ is an admissible vector field for the quotient network associated to the balanced coloring $\bowtie$. Moreover, any admissible vector field for the quotient network lifts to an admissible vector field for the network.

\begin{teo}[{\cite[Theorem 5.2]{GST05}}]\label{GST0552}
Let $N$ be a regular network with valency $\vartheta$, $\bowtie$ a balanced coloring of $N$ and $f:\mathbb{R}\times\mathbb{R}^\vartheta\rightarrow\mathbb{R}$ a $S_\vartheta$-invariant function. If $Q$ is the quotient network of $N$ associated to $\bowtie$, then

\noindent
$(i)$ The restriction of $f^N$ to $\Delta_{\bowtie}$ is the admissible vector field $f^Q$ for $Q$.

\noindent
$(ii)$ The admissible vector field $f^Q$ for $Q$ lifts to the admissible vector field $f^N$ for $N$.
\end{teo}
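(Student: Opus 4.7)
The plan is to prove (i) directly from the balanced-coloring property together with the $S_\vartheta$-invariance of $f$, and then observe that (ii) follows essentially for free because the same $S_\vartheta$-invariant function $f$ can be used to build both $f^N$ and $f^Q$. Throughout, I would fix a balanced coloring $\bowtie$ and write $[c]=[c]_{\bowtie}$ for the class of a cell $c$.

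For part (i), first I would check that $\Delta_{\bowtie}$ is invariant under $f^N$. Take $x\in \Delta_{\bowtie}$ and two cells $c,d$ with $c\bowtie d$. Since $\bowtie$ is balanced, there exists a bijection $\beta: I(c)\to I(d)$ such that $s(e)\bowtie s(\beta(e))$ for every $e\in I(c)$; because $x\in \Delta_{\bowtie}$, this means $x_{s(e)}=x_{s(\beta(e))}$, so the $\vartheta$-tuples $x_{s(I(c))}$ and $x_{s(I(d))}$ differ only by the permutation induced by $\beta$. Combined with $x_c=x_d$, the $S_\vartheta$-invariance of $f$ yields $(f^N(x))_c=f(x_c,x_{s(I(c))})=f(x_d,x_{s(I(d))})=(f^N(x))_d$, hence $f^N(x)\in\Delta_{\bowtie}$.

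Next I would identify $f^N|_{\Delta_{\bowtie}}$ with $f^Q$. Points of $\Delta_{\bowtie}$ are in canonical bijection with points $y\in\mathbb{R}^{|Q|}$ via $y_{[c]}=x_c$. By the construction of $Q$, for any representative $c\in[c]$ the input multiset $s(I([c]))$ in $Q$ is, as a list of colored classes, $([s(e)])_{e\in I(c)}$; the balanced property ensures this list is well defined up to permutation independently of the chosen representative. Therefore
\begin{equation*}
(f^Q(y))_{[c]}=f(y_{[c]},y_{s(I([c]))})=f(x_c,(x_{s(e)})_{e\in I(c)})=(f^N(x))_c,
\end{equation*}
where the middle equality again uses $S_\vartheta$-invariance of $f$ to absorb the permutation ambiguity. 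This proves $(f^N|_{\Delta_{\bowtie}})(x)$ corresponds, under the bijection $\Delta_{\bowtie}\leftrightarrow\mathbb{R}^{|Q|}$, to $f^Q(y)$, which is exactly statement (i).

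For part (ii), observe that admissible vector fields for $Q$ are exactly the maps $f^Q$ for $S_\vartheta$-invariant functions $f:\mathbb{R}\times\mathbb{R}^{\vartheta}\to\mathbb{R}$ (since $Q$ has the same valency $\vartheta$ as $N$). Given such an $f^Q$, the vector field $f^N$ built from the same function $f$ is admissible for $N$ by definition, and by part (i) its restriction to $\Delta_{\bowtie}$ is precisely $f^Q$, which gives the desired lift. The only subtle point in the whole argument is the careful bookkeeping in the second paragraph: verifying that the reindexing of input tuples induced by the bijections guaranteed by the balanced condition can be absorbed by $S_\vartheta$-invariance. I do not anticipate any further obstacle beyond making this identification precise.
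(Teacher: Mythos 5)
The paper states this result as a citation (\cite[Theorem 5.2]{GST05}) and supplies no proof of its own, so there is nothing internal to compare against; your argument is correct and is the standard one. The two essential points --- that the balanced condition provides a color-preserving bijection of input sets whose induced permutation is absorbed by the $S_\vartheta$-invariance of $f$, and that $Q$ is again regular of valency $\vartheta$ so the same function $f$ defines $f^Q$ and hence the lift in (ii) is immediate from (i) --- are both handled correctly.
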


In particular, the previous result means that if $x_Q(t)\in\mathbb{R}^{|Q|}$ is a solution to $\dot{x}_Q(t)=f^{Q}(x_Q(t))$, then $x_N(t)$ is a solution to $\dot{x}_N(t)=f^{N}(x_N(t))$, where $(x_N(t))_c=(x_Q(t))_{[c]_{\bowtie}}$ for each cell $c$ in $N$. And we say that $x_Q(t)$ is \emph{lifted} to $x_N(t)$.

\subsection{Steady-Sate Bifurcation on Regular Networks}\label{subsec:bifregnet}

Let $N$ be a regular network with valency $\vartheta$ and represented by the adjacency matrix $A$. We consider a family of regular functions $f:\mathbb{R}\times \mathbb{R}^{\vartheta}\times \mathbb{R}\rightarrow \mathbb{R}$ depending on a parameter $\lambda$, i.e., for each $\lambda\in \mathbb{R}$, the function $f(x_0,x_1,\dots,x_{\vartheta},\lambda)$ is  regular. In this work, we assume that this function is smooth in some neighborhood of the origin. Consider the coupled cell system 
\begin{equation}\label{eq:ccs}
\dot{x}=f^{N}(x,\lambda),
\end{equation}
where $f^{N}:\mathbb{R}^{|N|}\times \mathbb{R}\rightarrow \mathbb{R}^{|N|}$ is given for each cell $c$ in $N$ by
$$(f^{N}(x,\lambda))_c=f(x_c,x_{s(I(c))},\lambda).$$ 
Since we are assuming that $f$ is regular, the origin $0\in \mathbb{R}^{|N|}$ is an equilibrium point of the coupled cell system for every $\lambda\in \mathbb{R}$.

We are interested in studying the steady-state bifurcations of \ref{eq:ccs} occurring from the origin $x=0$ at $\lambda=0$. A local steady-state bifurcation at $\lambda=0$ near the origin can only occur if $J_f^N$ has a zero eigenvalue. Thus we will assume that one of the eigenvalues of $J_f^N$ is zero, say, $f_0+\mu f_1=0$ for some eigenvalue $\mu$ of $A$, where $f_0$ and $f_1$ are defined in \ref{eq:f0f1}. Observe that, under the generic hypothesis on $f$, $f_1\neq0$, $\mu$ is the unique eigenvalue satisfying $f_0+\mu f_1=0$. In this case, we say that $f$ is a \emph{regular function with a bifurcation condition associated to $\mu$}.

\begin{defi}
Let $N$ be a regular network with valency $\vartheta$ and represented by the adjacency matrix $A$, and $f:\mathbb{R}\times \mathbb{R}^{\vartheta}\times \mathbb{R}\rightarrow \mathbb{R}$ a regular function. We say that a differentiable function  $b=(b_N,b_{\lambda}):[0,\delta[\rightarrow \mathbb{R}^{|N|}\times\mathbb{R}$ is an \emph{equilibrium branch of $f$ on $N$} if $b(0)=(0,\dots,0,0)$, $b_{\lambda}(z)\neq 0$ and $$f^N(b_N(z),b_{\lambda}(z))=0,$$ for every $z>0$.
We say that an equilibrium branch $b$ is \emph{a bifurcation branch of $f$ on $N$} if $b$ is different from the trivial equilibrium branch of $f$ on $N$, i.e.,  for every $z\neq 0$,  \[b_N(z)\neq 0.\]
\end{defi}

Despite two different bifurcation branches can define essentially the same branch (e.g., by rescaling of the parameter), it is not a problem for our discussion about the existence of bifurcation branches (see \cite[Section 4.2]{F07} for a definition of  bifurcation branch that takes this aspect into account).

The trivial equilibrium branch is totally synchronized, since all cell's coordinates have the same value. Other bifurcation branches can have less synchrony depending on which synchrony subspaces they belong.

\begin{defi}
We say that an equilibrium branch $b:[0,\delta[\rightarrow \mathbb{R}^{|N|}\times\mathbb{R}$ has \emph{(exactly) synchrony $\bowtie$}, if $b_N([0,\delta[)\subseteq \Delta_{\bowtie}$ (and  $b_N([0,\delta[)\not\subseteq \Delta_{\bowtie'}$ for every $\bowtie'$ such that $\bowtie\prec \bowtie'$).
\end{defi}

In the same way we lift solutions, we can lift bifurcation branches on a quotient network to the original network, see the end of \ref{subsec:ccs}.

\section{Synchrony Branching Lemma}\label{sec:sbl}

In this section, we establish the two main steps in order to prove the existence of a bifurcation branch of generic regular functions on regular networks. First we apply the method of Lyapunov-Schmidt Reduction, \cite[Chapter VII]{GS85}, to the coupled cell system and we obtain a reduced equation for the bifurcation problem. Next we apply a blow-up argument (see e.g. \cite{LLH94}) to transform the reduced equation into a vector field on a sphere. 

Let $N$ be a regular network with valency $\vartheta$ and represented by the adjacency matrix $A$, $\mu$ an eigenvalue of $A$ and $f:\mathbb{R}\times\mathbb{R}^\vartheta\times\mathbb{R}\rightarrow \mathbb{R}$ a regular function with a bifurcation condition associated to $\mu$. 
Hence $\ker(J_f^N)\neq \{0\}$ and we assume the generic hypothesis that $m=\dim(\ker(J_f^N))>0$ is the geometric multiplicity of $\mu$ in $A$. 

Let $v_1,\dots, v_m$ be a basis for $\ker(J_f^N)$ (and eigenvectors of $A$ associated to $\mu$), $v^*_1,\dots,v^*_m$ be a basis for $\ran( J_f^N)^{\bot}$. Applying  the Lyapunov-Schmidt Reduction method, we get a function $g:\mathbb{R}^m\times\mathbb{R}\rightarrow\mathbb{R}^m$ such that the solutions of $f^N(x,\lambda)=0$ are in one-to-one correspondence with the solutions of $g(y,\lambda)=0$. 
We can calculate the derivatives of $g$ at the origin using the derivatives of $f$ at the origin, see \cite[Chapter VII \S 1 (d)]{GS85}. Since $f(0,0,\dots,0,\lambda)=0$ for every $\lambda$, we have that 
$$g(0,\lambda)=0,\quad\quad \frac{\partial g_i}{\partial y_j}(0,0)=0,\quad \quad \frac{\partial g_i}{\partial \lambda}(0,0)=0.$$
The Taylor expansion of $g$ at $(y,\lambda)=(0,0)$ has the following form:
$$g(y,\lambda)=L(\lambda)y+Q_k(y)+\mathcal{O}(\|y\|^{k+1}+\|y\|^2|\lambda|),$$
where 
$$L(\lambda)=\lambda Dg_{\lambda}+\mathcal{O}(2),$$
$Dg_\lambda$ is the matrix with entries $(\partial^2 g_i/\partial \lambda \partial y_j)$ evaluated at $(y,\lambda)=(0,0)$, $Q_k$ has homogenous polynomial components in the variable $y$ of smallest degree $k$ such that $Q_k$ does not vanish.
From \cite[Chapter VII \S 1 (d)]{GS85}, we know that
$$\frac{\partial^2 g_i}{\partial y_j\partial \lambda}(0,0)= \langle v_i^*, (D f_{\lambda}^N) v_j\rangle =\langle v_i^*, (f_{0\lambda}Id + f_{1\lambda} A) v_j\rangle=(f_{0\lambda} + \mu f_{1\lambda} ) \langle v_i^*, v_j\rangle,$$
where $\langle\cdot,\cdot\rangle$ is the usual inner product in $\mathbb{R}^{|N|}$. Therefore
$$D g_{\lambda}= (f_{0\lambda}+\mu f_{1\lambda})L,$$
where $$L=\left[\begin{matrix}
\langle v^*_1 , v_1\rangle&\langle v^*_1 , v_2\rangle &\dots & \langle v^*_1 , v_m\rangle\\
\langle v^*_2 , v_1\rangle&\langle v^*_2 , v_2\rangle &\dots & \langle v^*_2 , v_m\rangle\\
\vdots&\vdots &\ddots & \vdots \\
\langle v^*_m , v_1\rangle&\langle v^*_m , v_2\rangle &\dots & \langle v^*_m , v_m\rangle
\end{matrix}\right].$$

We will assume that the eigenvalue $\mu$ is \emph{semisimple}, i.e., $\mu$ has the same algebraic and geometric multiplicity. Note that $L$ is invertible 
 if and only if $\mu$ is semisimple.

Since $L$ is invertible, we can choose a basis $v'^*_1,\dots,v'^*_m$ of $\ran(J_f^N)^{\bot}$ such that 
$$\left[\begin{matrix}
\langle v'^*_1 , v_1\rangle&\langle v'^*_1 , v_2\rangle &\dots & \langle v'^*_1 , v_m\rangle\\
\langle v'^*_2 , v_1\rangle&\langle v'^*_2 , v_2\rangle &\dots & \langle v'^*_2 , v_m\rangle\\
\vdots&\vdots &\ddots & \vdots \\
\langle v'^*_m , v_1\rangle&\langle v'^*_m , v_2\rangle &\dots & \langle v'^*_m , v_m\rangle
\end{matrix}\right]=Id,$$
by taking $v'^*_i=\sum_{l=1}^m b_{i\:l} v^*_l$ where $b_{i\:j}$ are the entries of $L^{-1}$ for $1\leq i,j \leq m$.

In the following we assume that $\mu$ is semisimple and that we have chosen a basis of $\ran(J_f^N)^{\bot}$ in the Lyapunov-Schmidt Reduction such
\begin{equation}
g(y,\lambda)=L(\lambda) y +Q_k(y)+\mathcal{O}(\|y\|^{k+1}+\|y\|^2|\lambda|),
\label{eq:taylypred}
\end{equation}
where $Q_k$ has homogenous polynomial components in the variable $y$ of smallest degree $k$ such that $Q_k$ does not vanish and
$$L(\lambda)=\lambda(f_{0\lambda}+\mu f_{1\lambda})Id+\mathcal{O}(2).$$

\begin{defi}
Let $N$ be a regular network with valency $\vartheta$ and $f:\mathbb{R}\times\mathbb{R}^\vartheta\times\mathbb{R}\rightarrow \mathbb{R}$ a regular function. We denote by $k(N,f)$ the integer $k$ in \ref{eq:taylypred}. We say that the bifurcation problem of $f$ on $N$ has $k-1$ \emph{degeneracy}.
\end{defi}

In \cite{SG11}, the authors studied the degeneracy of a bifurcation problem on regular networks associated to simple eigenvalues. They have shown that there exist bifurcation problems on regular networks with high degeneracy. We refer the reader to their work for examples of $k$-degenerated bifurcation problems on regular networks, with $1 \leq k\leq5$.

Before we prove that the integer $k(N,f)$ does not depend generically on the regular function $f$ associated to some eigenvalue, we give  an explicit formula for the second derivatives of $g_i$ with respect to $y_j$ and $y_l$ for $1\leq i,j,l\leq m$.

Since $f:\mathbb{R}\times\mathbb{R}^\vartheta\times\mathbb{R}\rightarrow \mathbb{R}$ is a regular function, it has the following Taylor expansion at the origin:
\begin{align*}
f(x_0,x_1,\dots,x_\vartheta,\lambda)&=f_0 x_0 + f_1(x_1+\dots+x_\vartheta)+ f_{0\lambda} x_0 \lambda +  f_{1\lambda}(x_1+\dots+x_\vartheta)\lambda \\
&+ \frac{f_{00}}{2} x_0^2 + f_{01} x_0(x_1+\dots + x_\vartheta) + \frac{f_{11}}{2} (x_1^2+\dots +x_\vartheta^2)  \\
&+ f_{1\vartheta}(\sum_{i=1}^\vartheta\sum_{j>i}^\vartheta x_{i}x_j)+ \mathcal{O}(3),
\end{align*}
where 
\begin{align*}
f_{00}= \frac{\partial^2 f}{\partial x_0 \partial x_0}(0,0),&\quad &  f_{11}= \frac{\partial^2 f}{\partial x_1 \partial x_1}(0,0)=\frac{\partial^2 f}{\partial x_i \partial x_i}(0,0),\\
f_{01}= \frac{\partial^2 f}{\partial x_0 \partial x_1}(0,0)= \frac{\partial^2 f}{\partial x_0 \partial x_i}(0,0), & \quad  & f_{1\vartheta}= \frac{\partial^2 f}{\partial x_1 \partial x_\vartheta}(0,0)= \frac{\partial^2 f}{\partial x_i \partial x_j}(0,0),
\end{align*}
for $i,j>0$ and $i\neq j$. 

For every cell $c$ in $N$, we denote by $c_1,\dots,c_\vartheta$ the source cells of the edges that target $c$ (repeated, if there is more than one edge from the same cell).
\begin{align*}
d^2 f_c^N(v_j&,v_l)= f_{00}(v_j\ast v_l)_c+ 2 f_{01} (v_j\ast A v_l)_c+ \sum_{a=1}^\vartheta \sum_{b=1}^\vartheta f_{ab}(v_j)_{c_a}(v_l)_{c_b}\\
=&(f_{00}+2\mu f_{01})(v_j\ast v_l)_c + f_{1v}(Av_j\ast Av_l)_c + (f_{11}-f_{1v}) \sum_{a=1}^\vartheta (v_j\ast v_l)_{c_a}\\
=&(f_{00}+2\mu f_{01}+\mu^2 f_{1v})(v_j\ast v_l)_c + (f_{11}-f_{1v}) (A(v_j\ast v_l))_c,
\end{align*}
where $w\ast z=(w_1 z_1,w_2 z_2,\dots,w_n z_n)$, for $w,z\in \mathbb{R}^n$. So
$$d^2 f^N(v_j,v_l)=(f_{00}+2\mu f_{01}+\mu^2 f_{1v})v_j\ast v_l + (f_{11}-f_{1v}) A(v_j\ast v_l).$$

It follows from \cite[Chapter VII \S 1 (d)]{GS85} and the Taylor expansion of $f$ that
\begin{align*}
\frac{\partial^2 g_i}{\partial y_j\partial y_l}(0,0)=
(f_{00}+2\mu f_{01}+\mu^2 f_{1\vartheta})\langle v_i^*,v_j\ast v_l\rangle + (f_{11}-f_{1\vartheta})  \langle v_i^*, A(v_j\ast v_l) \rangle,
\end{align*}
for $1\leq i,j,l\leq m$. Since $\mu$ is semisimple, $v_i^*$ is orthogonal to any generalized eigenvector associated to an eigenvalue different from $\mu$. Writing  $v_j\ast v_l$ in the base of generalized eigenvectors of $A$, we have that $\langle v_i^*, A(v_j\ast v_l) \rangle=\mu \langle v_i^*,v_j\ast v_l\rangle$. Thus, for $1\leq i,j,l\leq m$,
$$\frac{\partial^2 g_i}{\partial y_j\partial y_l}(0,0)=(f_{00}+2\mu f_{01}+\mu f_{11}-\mu f_{1\vartheta}+\mu^2 f_{1\vartheta})\langle v_i^*,v_j\ast v_l\rangle.
$$
Thus, generically, the vanish of the second derivatives of $g$ is independent of the regular function $f$ with a bifurcation condition associated to $\mu$.
Next, we prove that the smallest integer $k(N,f)$ is, generically, the same for every regular function $f$ with a bifurcation condition associated to $\mu$.

\begin{lem}\label{lem:degnetstr}
Let $N$ be a regular network with valency $\vartheta$ and adjacency matrix $A$, $\mu$ an eigenvalue of $A$ and $f,f':\mathbb{R}\times\mathbb{R}^\vartheta\times\mathbb{R}\rightarrow \mathbb{R}$ regular functions with a bifurcation condition associated to $\mu$. Then, generically,
$$k(N,f)=k(N,f').$$
\end{lem}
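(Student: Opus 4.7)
The plan is to show that $k(N,f)$ is controlled by data depending only on the network $N$ and the eigenvalue $\mu$, with the specific regular function $f$ entering only through scalar polynomial factors that are generically nonzero. This extends to all orders the explicit factorisation already displayed for the second derivatives of the Lyapunov-Schmidt reduction $g$, namely
$$\frac{\partial^2 g_i}{\partial y_j\partial y_l}(0,0) = \bigl(f_{00}+2\mu f_{01}+\mu f_{11}-\mu f_{1\vartheta}+\mu^2 f_{1\vartheta}\bigr)\,\langle v_i^*,\,v_j\ast v_l\rangle,$$
in which the scalar prefactor is a polynomial in the second-order Taylor coefficients of $f$ and in $\mu$, while the vector part $\langle v_i^*,\, v_j\ast v_l\rangle$ depends only on $N$ and the chosen bases.

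First I would establish, by induction on $p$, a decomposition
$$D^p f^N(w_1,\ldots,w_p) \;=\; \sum_\alpha c_\alpha(f)\,T_\alpha(w_1,\ldots,w_p),$$
where each $c_\alpha(f)$ is a polynomial in the partial derivatives of $f$ at the origin and each $T_\alpha$ is built from the pointwise product $\ast$ and the adjacency matrix $A$ alone. The case $p=2$ is exactly the identity displayed immediately before the lemma, and the inductive step is driven by the $S_\vartheta$-invariance of $f$: any sum over the input cells of a given cell reorganises into expressions involving $A$ applied to pointwise products of the $w_r$'s. Plugging this into the recursive formulas of \cite[Chapter VII \S 1 (d)]{GS85} for the derivatives of $g$, and using that $\mu$ is semisimple (so $v_i^*$ annihilates every generalised eigenspace of $A$ for eigenvalues other than $\mu$, and $Av_{j_r}=\mu v_{j_r}$), one obtains
$$\frac{\partial^k g_i}{\partial y_{j_1}\cdots\partial y_{j_k}}(0,0) \;=\; \sum_\alpha P_\alpha(f)\,S_\alpha^N(i,j_1,\ldots,j_k),$$
where the $S_\alpha^N$ are inner products of $v_i^*$ against multinomials in the $v_{j_r}$ built from $\ast$ and $A$, and the $P_\alpha(f)$ are polynomials in the Taylor coefficients of $f$ (and in $\mu$).

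For each order $k$, the simultaneous vanishing of all $P_\alpha(f)$ whose associated $S_\alpha^N$ is nonzero then defines a proper algebraic subset of the finite-dimensional space of Taylor coefficients of $f$ up to order $k$, subject to the bifurcation condition $f_0+\mu f_1=0$. Outside this subset, and hence generically in $f$, the order $k(N,f)$ equals the smallest order at which some $S_\alpha^N$ does not vanish, a number depending only on $(N,\mu)$. Applying this to both $f$ and $f'$ yields $k(N,f)=k(N,f')$ generically. The main technical obstacle is the bookkeeping in the induction on $p$: at orders $\geq 3$, the Lyapunov-Schmidt recursion involves derivatives of the implicit map defining the complementary coordinates, and one must verify that these remain polynomial (not merely rational) in the Taylor coefficients of $f$ and that the family $\{P_\alpha(f)\}$ produced at each order is genuinely non-trivial whenever the corresponding $\{S_\alpha^N\}$ is. Semisimplicity of $\mu$ is essential here, since it ensures that the inversion of $J_f^N$ on $\ran(J_f^N)$ required by the Lyapunov-Schmidt procedure can be carried out in a way that preserves polynomial dependence on $f$.
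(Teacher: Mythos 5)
Your proposal follows essentially the same route as the paper: both express the order-$l$ derivatives of the reduced function $g$ as sums of $f$-dependent scalar coefficients multiplying network-dependent inner products (built from $\ast$, $A$, $v_i^*$ and the eigenvectors), and then conclude by a genericity argument that the first non-vanishing order is determined by the network data alone. The paper streamlines the bookkeeping you flag as the main obstacle by expanding $f$ in symmetrized monomials, so that the $f$-dependence of the displayed formula for $\partial^l g_i/\partial y_{i_1}\cdots\partial y_{i_l}$ is linear in the distinct Taylor coefficients $f^{n_0 n_1\cdots n_\vartheta}$ and the non-triviality of the associated polynomial family (which you correctly note must be checked) is then automatic.
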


\begin{proof}
For a given integer $l$, we can rearrange the terms in the Taylor expansion of $f$ as follows
\begin{align*}
 f(x_0,x_1,\dots,x_\vartheta,\lambda)&=P_{l-1}(x_0,x_1,\dots,x_\vartheta)+  P_{l}(x_0,x_1,\dots,x_\vartheta)\\ &+P_{l+1}(x_0,x_1,\dots,x_\vartheta)+R(x_0,x_1,\dots,x_\vartheta,\lambda),\end{align*}
where $P_{l-1}$ is a polynomial of degree lower or equal to $l-1$, $P_{l+1}$ has only terms of degree upper or equal to $l+1$, $R$ is a function such that $R(x_0,x_1,\dots,x_\vartheta,0)=0$ and $P_l$ is homogenous polynomial of degree $l$:
$$P_{l}(x_0,x_1,\dots,x_\vartheta)=\sum_{\substack{0\leq n_0,n_1,\dots,n_\vartheta\leq l \\ n_0+n_1+\dots+n_\vartheta=l}} \frac{f^{n_0 n_1 \dots n_\vartheta}}{n_0 !n_1 !\dots n_\vartheta !} x_0^{n_0}x_1^{n_1} \dots x_\vartheta^{n_\vartheta},$$
$f^{n_0 n_1 \dots n_\vartheta}$ is the $l$-partial derivative of $f$ at $(0,0,\dots,0,0)$ with respect $n_0$ times to $x_0$, $n_1$ times to $x_1$, $\dots$, and $n_\vartheta$ times to $x_\vartheta$.
 Since $f$ is $S_\vartheta$-invariant, $P_{l}$ is also $S_\vartheta$-invariant and it has the following form, see e.g. \cite{M95}, 
$$P_{l}(x_0,x_1,\dots,x_\vartheta)=\sum_{n_0=0}^{l}\sum_{\substack{0\leq  n_1 \leq \dots\leq n_\vartheta\leq l \\ n_0+n_1+\dots+n_\vartheta=l}} \frac{f^{n_0 n_1 \dots n_\vartheta}}{n_0 !n_1 !\dots n_\vartheta !} x_0^{n_0} \left( \sum_{\sigma\in S_{\vartheta}} x_{\sigma(1)}^{n_1} \dots x_{\sigma(\vartheta)}^{n_\vartheta}\right).$$
For $1\leq i,i_1,\dots,i_l\leq m$, the $l$-th derivative of $g_i$ with respect to $y_{i_1}, y_{i_2},\dots, y_{i_l}$ at $(y,\lambda)=(0,\dots,0,0)$ is given by
$$\frac{\partial^{l}g_i}{\partial y_{i_1}\partial y_{i_2}\dots\partial y_{i_l}}= \sum_{n_0=0}^{l}\sum_{\substack{0\leq  n_1 \leq \dots\leq n_\vartheta\leq l \\ n_0+n_1+\dots+n_\vartheta=l}} \frac{f^{n_0 n_1 \dots n_\vartheta}}{n_0 !n_1 !\dots n_\vartheta !} \langle v^*_i,A_{n_0 n_1 \dots n_\vartheta}(v_{i_1},\dots,v_{i_l})\rangle,$$
where $A_{n_0 n_1 \dots n_\vartheta}(v_{i_1},\dots,v_{i_l})\in \mathbb{R}^{|N|}$ and it is given for every cell $c$ in $N$ by
$$(A_{n_0 n_1 \dots n_\vartheta}(v_{i_1},\dots,v_{i_l}))_c=
\sum_{\sigma\in S_{\vartheta}} \frac{\partial^l}{\partial t_1\dots \partial t_l} \left.\left( \prod_{b=0}^{\vartheta}(t_{1}v_{i_1}+\dots+t_l v_{i_l})_{c_{\sigma(b)}}^{n_b} \right)\right|_{t_i=0},$$
where $\sigma(0)=0$, $c_0=c$ and $c_1,\dots,c_\vartheta$ are the source cells of the edges that target $c$.

 Note that every term in the variable $y$ of degree $l$ in $g$ vanish if and only if for every $1\leq i,i_1,\dots,i_l\leq m$
$$\frac{\partial^{l}g_i}{\partial y_{i_1}\partial y_{i_2}\dots\partial y_{i_l}}=0.$$

For each $1\leq i,i_1,\dots,i_l\leq m$, regard $(\partial^{l}g_i)/(\partial y_{i_1}\partial y_{i_2}\dots\partial y_{i_l})$ as a polynomial function in the variables $f^{n_0 n_1 \dots n_\vartheta}$, where $0\leq n_0\leq l$ and $0\leq  n_1\leq n_2 \leq \dots\leq n_\vartheta\leq l$ such that $n_0+n_1+\dots+n_\vartheta=l$. We have two cases: $(\partial^{l}g_i)/(\partial y_{i_1}\partial y_{i_2}\dots\partial y_{i_l})$ is identically zero since 
$$\langle v^*_i,A_{n_0 n_1 \dots n_\vartheta}(v_{i_1},v_{i_2},\dots,v_{i_l})\rangle=0,$$
for every $0\leq n_0\leq l$ and $0\leq  n_1\leq n_2 \leq \dots\leq n_\vartheta\leq l$; otherwise the set of regular functions such that $(\partial^{l}g_i)/(\partial y_{i_1}\partial y_{i_2}\dots\partial y_{i_l})=0$ is residual and for every generic regular function  we have that
$$\frac{\partial^{l}g_i}{\partial y_{i_1}\partial y_{i_2}\dots\partial y_{i_l}}\neq 0.$$

Therefore, generically, given $f$ regular every term in the variable $y$ of degree $l$ in $g$ vanishes if and only if 
$$\langle v^*_i,A_{n_0 n_1 \dots n_\vartheta}(v_{i_1},v_{i_2},\dots,v_{i_l})\rangle=0,$$
for every $1\leq i,i_1,\dots,i_l\leq m$, $0\leq n_0\leq j$ and $0\leq  n_1\leq n_2 \leq \dots\leq n_\vartheta\leq j$.

The second part of the previous ``if and only if'' does not depend on the regular function $f$. So for every regular functions $f,f':\mathbb{R}\times\mathbb{R}^\vartheta\times\mathbb{R}\rightarrow \mathbb{R}$ with a bifurcation condition associated to $\mu$, we have  generically that
\[k(N,f)=k(N,f').\]
\end{proof}

Following the previous lemma, we define the smallest $k$ in \ref{eq:taylypred} as a function of the network $N$ and the eigenvalue $\mu$.

\begin{defi}
Let $N$ be a regular network and $\mu$ an eigenvalue of its adjacency matrix. For any generic regular function $f$ with a bifurcation condition associated to $\mu$, we define $k(N,\mu)= k(N,f)$. We say that the bifurcation problem associated to $\mu$ on $N$ has $k(N,\mu)-1$ \emph{degeneracy}.
\end{defi}

\begin{obs}
Let $N$ be a regular network, $Q$ a quotient network of $N$ and $\mu$ an eigenvalue of the adjacency matrix associated to $Q$. Then $$k(N,\mu)\leq k(Q,\mu).$$
In fact, let $f$ be a regular function with a bifurcation condition associated to $\mu$. Denote by $g^Q$ and $g^N$ the reduced functions of $f^Q$ and $f^N$, respectively, obtained by Lyapunov-Schmidt reduction. The previous inequality follows from the fact $$g^Q=P^{-1} g^N P,$$ 
where $P: \mathbb{R}^{|Q|}\rightarrow \Delta\subseteq \mathbb{R}^{|N|}$ is the lift of the phase space of $Q$ into the synchrony subspace of $N$ associated to the quotient network and $P^{-1}: \Delta\rightarrow\mathbb{R}^{|Q|}$ is the inverse of $P$.
\end{obs}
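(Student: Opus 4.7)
The plan is to establish the author's identity $g^Q = P^{-1} \circ g^N \circ P$ and then compare the two Taylor expansions term by term. Let $\bowtie$ be the balanced coloring on $N$ yielding $Q$, and let $\Delta = \Delta_{\bowtie} \subseteq \mathbb{R}^{|N|}$. The lift $P : \mathbb{R}^{|Q|} \to \Delta$ is the linear isomorphism given by $(Py)_c = y_{[c]_{\bowtie}}$; by \ref{GST0552}(i),
\[
f^N(Py, \lambda) = P\, f^Q(y, \lambda) \qquad \forall\, y \in \mathbb{R}^{|Q|},\ \lambda \in \mathbb{R}.
\]
Differentiating at the origin yields $J_f^N P = P J_f^Q$, hence $P(\ker J_f^Q) \subseteq \ker J_f^N$ and $P(\ran(J_f^Q)) \subseteq \ran(J_f^N) \cap \Delta$. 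In particular the $\mu$-multiplicity $m_Q$ in $A_Q$ is at most the multiplicity $m$ in $A_N$, so the reduction on $Q$ is lower-dimensional.

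Next I would carry out the Lyapunov--Schmidt reductions for $f^N$ and $f^Q$ in a compatible fashion. Choose a basis $w_1,\dots, w_{m_Q}$ of $\ker J_f^Q$ and set $v_i = Pw_i$ for $1 \le i \le m_Q$; complete $\{v_i\}_{i \le m_Q}$ to a basis $v_1,\dots,v_m$ of $\ker J_f^N$ by adjoining vectors in a $J_f^N$-invariant complement to $\Delta$. Perform the analogous extension for a complement of $\ran(J_f^N)$, starting from the images under $P$ of a complement of $\ran(J_f^Q)$. This is possible precisely because $\Delta$ is $J_f^N$-invariant and $P$ conjugates $J_f^Q$ to $J_f^N|_\Delta$. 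With the projections onto $\ker J_f^N$ along these compatible complements, the implicit function step of the Lyapunov--Schmidt reduction for $f^N$, when restricted to arguments in $P(\ker J_f^Q)$, produces exactly the Lyapunov--Schmidt reduction for $f^Q$. Unwinding the identifications gives
\[
g^Q(y, \lambda) = P^{-1}\, g^N(Py, \lambda), \qquad y \in \mathbb{R}^{m_Q},
\]
where $\mathbb{R}^{m_Q} \hookrightarrow \mathbb{R}^m$ is the inclusion picked out by $v_1,\dots,v_{m_Q}$.

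Finally, the inequality is read off from Taylor expansions. Since $P$ and $P^{-1}$ are linear, the degree-$l$ homogeneous component of $g^Q(\cdot, 0)$ is the restriction to $\mathbb{R}^{m_Q}$ of the degree-$l$ component of $g^N(\cdot, 0)$ (composed with $P^{-1}$). Hence if all Taylor terms of $g^N$ of degree $< k(N,f)$ vanish, the same is true for $g^Q$, giving $k(N,f) \le k(Q,f)$. Taking $f$ to be a generic regular function with a bifurcation condition associated to $\mu$ and invoking \ref{lem:degnetstr} for both networks yields $k(N,\mu) \le k(Q,\mu)$. The main obstacle is the compatibility of the two Lyapunov--Schmidt reductions: one must verify that the $J_f^N$-invariance of $\Delta$ permits the kernel and range complements for $N$ to be chosen so that they split along $\Delta$, so that the reduction on $N$ genuinely restricts to the reduction on $Q$ rather than merely coinciding to leading order.
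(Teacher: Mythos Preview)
Your approach is the same as the paper's: the Remark in the paper gives no detailed argument at all beyond asserting the conjugacy $g^Q = P^{-1} g^N P$ and saying the inequality follows, so you are fleshing out exactly that sketch. Your final step---restricting the degree-$l$ homogeneous part of $g^N$ to obtain that of $g^Q$ via the linear maps $P, P^{-1}$, hence $k(N,f)\le k(Q,f)$---is precisely how the inequality is meant to be read off.

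One small point of phrasing: when you extend $v_1,\dots,v_{m_Q}$ to a basis of $\ker J_f^N$ you ask for vectors ``in a $J_f^N$-invariant complement to $\Delta$''. An invariant complement to $\Delta$ need not exist in general; what you actually need (and what suffices) is simply a complement of $\ker J_f^N \cap \Delta$ inside $\ker J_f^N$, and likewise for the range. In the semisimple setting assumed throughout (so that $\mathbb{R}^{|N|}=\ker J_f^N \oplus \ran J_f^N$), the compatibility you flag in your final paragraph does go through: since $\Delta$ is $f^N$-invariant and $P$ conjugates $f^Q$ to $f^N|_\Delta$, the implicit-function step of the reduction on $N$, restricted to $\Delta$, solves exactly the equation defining the reduction on $Q$, so uniqueness in the implicit function theorem gives the desired restriction. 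That is the content of the paper's (unproved) identity, and your awareness of it as the key technical point is appropriate.
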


In the next step we apply a blow-up argument also used in the equivariant theory, see e.g. \cite{LLH94}. Applying the following change of variables to the reduced function $g$ of the Lyapunov-Schmidt reduction \ref{eq:taylypred},
\begin{equation}\label{eq:blowuptrans}
\begin{cases}
y=\epsilon u\\
\lambda= \epsilon^{k-1} \eta
\end{cases},
\end{equation}
where $u\in S^{m-1}$($m-1$ dimensional sphere), $\epsilon\in\mathbb{R}$, $\eta\in \mathbb{R}$ and $k=k(N,\mu)$, we have the following equation:
$$g(y,\lambda)=0 \Leftrightarrow g(\epsilon u, \epsilon^{k-1} \eta)=0 \Leftrightarrow \epsilon^{k}\left( \eta(f_{0\lambda}+\mu f_{1\lambda}) u + Q_k(u)+\mathcal{O}(|\epsilon|)\right)=0.$$

Let $h:S^{m-1}\times\mathbb{R}\times \mathbb{R}\rightarrow \mathbb{R}^{m}$ be the function given by $$h(u,\epsilon,\eta)=\eta(f_{0\lambda}+\mu f_{1\lambda}) u + Q_k(u)+\mathcal{O}(|\epsilon|).$$ For $y\neq 0$, we have that
\begin{equation}\label{eq:blowupofg}
g(y,\lambda)=0 \Leftrightarrow h(u,\epsilon,\eta)=0.
\end{equation}

\begin{prop}\label{prop:bifbrtozero}
Let $N$ be a regular network with valency $\vartheta$ and adjacency matrix $A$, $\mu$ a semisimple eigenvalue of $A$ with multiplicity $m$ and $f:\mathbb{R}\times\mathbb{R}^\vartheta\times\mathbb{R}\rightarrow \mathbb{R}$ a regular function with a bifurcation condition associated to $\mu$. If $b$ is a bifurcation branch of $f$ on $N$ with synchrony $\bowtie$, then generically there exists $\tilde{u}\in S^{m-1}$ and $\tilde{\eta}\in\mathbb{R}$ such that
$$h(\tilde{u},0,\tilde{\eta})=0,$$
and $$\tilde{u}_1 v_1 +\dots+ \tilde{u}_m v_m \in \Delta_{\bowtie},$$
where $v_1,\dots, v_m$ is the basis of $\ker(J_f^N)$.
\end{prop}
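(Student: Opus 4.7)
The plan is to track the bifurcation branch through the Lyapunov--Schmidt reduction and the blow-up (\ref{eq:blowuptrans}), extracting subsequential limits as $z\to 0^+$.

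First I would arrange the Lyapunov--Schmidt decomposition to be compatible with $\Delta_{\bowtie}$. Since $\mu$ is semisimple, $\ker(J_f^N)$ is the $\mu$-eigenspace of $A$ and admits the $A$-invariant direct complement $W$ given by the sum of the generalized eigenspaces of $A$ associated to the other eigenvalues. The coloring being balanced, $\Delta_{\bowtie}$ is $A$-invariant, so it splits as $(\Delta_{\bowtie}\cap\ker(J_f^N))\oplus(\Delta_{\bowtie}\cap W)$. Performing Lyapunov--Schmidt along the decomposition $\ker(J_f^N)\oplus W$, the projection onto $\ker(J_f^N)$ preserves $\Delta_{\bowtie}$, and the branch $b_N(z)\in\Delta_{\bowtie}$ corresponds to a curve $(y(z),b_{\lambda}(z))$ with $g(y(z),b_{\lambda}(z))=0$, $\sum_{i=1}^m y_i(z)v_i\in\Delta_{\bowtie}$ for every $z$, and $y(z)\neq 0$ for $z>0$ (otherwise $b_N$ would coincide with the trivial branch).

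Next I would apply the blow-up along this curve. Setting $\epsilon(z)=\|y(z)\|$, $u(z)=y(z)/\epsilon(z)\in S^{m-1}$, and $\eta(z)=b_{\lambda}(z)/\epsilon(z)^{k-1}$ with $k=k(N,\mu)$, the equation $g(y(z),b_{\lambda}(z))=0$ is equivalent, by (\ref{eq:blowupofg}), to $h(u(z),\epsilon(z),\eta(z))=0$ for every $z>0$. Substituting $y=\epsilon u$ in (\ref{eq:taylypred}) and dividing by $\epsilon(z)$ yields
\begin{equation*}
b_{\lambda}(z)\,(f_{0\lambda}+\mu f_{1\lambda})\,u(z)+\epsilon(z)^{k-1}Q_k(u(z))+\mathcal{O}\bigl(b_{\lambda}(z)^2+\epsilon(z)^k+\epsilon(z)|b_{\lambda}(z)|\bigr)=0.
\end{equation*}
Taking norms and using $\|u(z)\|=1$, the genericity condition $f_{0\lambda}+\mu f_{1\lambda}\neq 0$, and boundedness of $Q_k$ on $S^{m-1}$, a standard reverse-triangle estimate gives $|b_{\lambda}(z)|=O(\epsilon(z)^{k-1})$, so $\eta(z)$ is bounded as $z\to 0^+$.

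Finally, by compactness of $S^{m-1}$ and boundedness of the range of $\eta$, some sequence $z_n\to 0^+$ satisfies $u(z_n)\to\tilde u\in S^{m-1}$ and $\eta(z_n)\to\tilde\eta\in\mathbb{R}$. Continuity of $h$ passes $h(u(z_n),\epsilon(z_n),\eta(z_n))=0$ to the limit $h(\tilde u,0,\tilde\eta)=0$. Since $\Delta_{\bowtie}$ is closed and $\sum_i u_i(z)v_i\in\Delta_{\bowtie}$ for every $z>0$, the limit gives $\sum_i\tilde u_iv_i\in\Delta_{\bowtie}$. I expect the main delicate point to be the first paragraph: verifying that the reduced function $g$ and its expansion (\ref{eq:taylypred}) carry over to the $A$-invariant Lyapunov--Schmidt decomposition $\ker(J_f^N)\oplus W$ (instead of the orthogonal complement used earlier), so that the projection of $b_N(z)$ onto $\ker(J_f^N)$ genuinely lies in $\Delta_{\bowtie}$.
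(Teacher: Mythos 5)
Your argument is correct and follows the same overall strategy as the paper --- project the branch onto $\ker(J_f^N)$, pass through the blow-up, and extract a limit point --- but it differs in the one step that carries the technical weight. The paper obtains $\tilde u$ and $\tilde\eta$ as genuine limits by differentiating the branch at $z=0$: it invokes $\tilde b_y'(0)\neq 0$ and proves by induction that $\tilde b_\lambda^{(j)}(0)=0$ for $1\leq j<k-1$, so that $\tilde u=\tilde b_y'(0)/\|\tilde b_y'(0)\|$ and $\tilde\eta=\tilde b_\lambda^{(k-1)}(0)/\bigl((k-1)!\,\|\tilde b_y'(0)\|^{k-1}\bigr)$. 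That route yields explicit formulas but tacitly requires the branch to be $(k-1)$ times differentiable at $0$ and a separate justification that the projection of $b_N'(0)$ onto the kernel is nonzero. You instead derive the a priori bound $|b_\lambda(z)|=O(\epsilon(z)^{k-1})$ directly from the radial part of the equation (using $f_{0\lambda}+\mu f_{1\lambda}\neq 0$) and then use compactness of $S^{m-1}$ to pass to a subsequential limit; since the statement only asserts existence of $(\tilde u,\tilde\eta)$, this suffices, and it needs only continuity of the branch. Your estimate checks out: after dividing by $\epsilon$, the terms $\mathcal O(\lambda^2)$, $\mathcal O(\epsilon^{k})$ and $\mathcal O(\epsilon|\lambda|)$ are all absorbed for small $z$, leaving $|\lambda|\leq C\epsilon^{k-1}$.

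One reassurance on the point you flag as delicate: the decomposition $\ker(J_f^N)\oplus W$ you propose is not a different reduction from the one already set up in Section 3. Since $\mu$ is semisimple and $J_f^N=f_1(A-\mu\,Id)$ with $f_1\neq 0$, your $W$ is exactly $\ran(J_f^N)$, and the coordinates $y_i=\langle v'^{*}_i,x\rangle$ used there (with $\langle v'^{*}_i,v_j\rangle=\delta_{ij}$ and $v'^{*}_i\perp\ran(J_f^N)$) are precisely the projection onto $\ker(J_f^N)$ along $W$. So no modification of $g$ or of the expansion \ref{eq:taylypred} is needed; your observation that this projection maps $\Delta_{\bowtie}$ into $\Delta_{\bowtie}\cap\ker(J_f^N)$ (because both summands are images of $\Delta_{\bowtie}$ under spectral projections of $A$, which preserve the $A$-invariant subspace $\Delta_{\bowtie}$) is exactly the fact the paper uses implicitly when it asserts $\tilde b_{y_1}(z)v_1+\dots+\tilde b_{y_m}(z)v_m\in\Delta_{\bowtie}$, and it is good that you made it explicit.
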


\begin{proof}
Let $b=(b_N,b_\lambda)$ be a bifurcation branch of $f$ on $N$ and $k=k(N,\mu)$. Consider $\tilde{b}=(\tilde{b}_y,\tilde{b}_\lambda):[0,\delta[\rightarrow \mathbb{R}^m\times \mathbb{R}$ such that $\tilde{b}_y$ is the projection of $b_N$ into $\ker(J_f^N)$ according to the basis $v_1,\dots, v_m$ and $\tilde{b}_\lambda=b_\lambda$. Then
$$g(\tilde{b}_y(z),\tilde{b}_\lambda(z))=h\left(\frac{\tilde{b}_y(z)}{\|\tilde{b}_y(z)\|},\|\tilde{b}_y(z)\| ,\frac{\tilde{b}_\lambda(z)}{\|\tilde{b}_y(z)\|^{k-1}}\right)=0,$$
for $z\neq 0$.
Note that $\tilde{b}'_y(0)\neq 0$, see e.g. \cite[Lemma 4.2.1]{F07}. We can also prove by induction on $j$, that $\tilde{b}^{(j)}_{\lambda}(0)= 0$, for $1 \leq j<k-1$, where $\tilde{b}^{(j)}_{\lambda}(0)$ is the $j$-derivative of $\tilde{b}_{\lambda}$ at $z=0$.
Taking the limit of $z\rightarrow 0$ in the previous equation,
$$h(\tilde{u},0,\tilde{\eta})=0,$$
where $$\tilde{u}= \frac{\tilde{b}'_y(0)}{\|\tilde{b}'_y(0)\|},\quad \tilde{\eta}=\frac{\tilde{b}^{(k-1)}_\lambda(0)}{(k-1)!\|\tilde{b}'_y(0)\|^{k-1}}.$$ 

If $b$ has synchrony $\bowtie$, then $b_N(z)\in\Delta_{\bowtie}$ and  $\tilde{b}_{y_1}(z) v_1 +\dots+ \tilde{b}_{y_m}(z) v_m \in \Delta_{\bowtie}$ for every $z\geq 0$. So $\tilde{b}'_{y_1}(0) v_1 +\dots+ \tilde{b}'_{y_m}(0) v_m \in \Delta_{\bowtie}$ and 
\[\tilde{u}_1 v_1 +\dots+ \tilde{u}_m v_m \in \Delta_{\bowtie}.\]
\end{proof}

It follows from \ref{prop:bifbrtozero} that we can study the zeros of $h(u,0,\eta)$ to understand the bifurcation branches. For the radial component 
$$ \langle h(u,0,\eta),u\rangle=0 \Leftrightarrow \eta= - \frac{\langle Q_k(u), u\rangle}{f_{0\lambda}+\mu f_{1\lambda}},$$
where it is assumed, generically, that $f_{0\lambda}+\mu f_{1\lambda}\neq0$.
Defining $$\tilde{\eta}(u)=- \frac{\langle Q_k(u), u\rangle}{f_{0\lambda}+\mu f_{1\lambda}}, \quad \quad\quad\quad \tilde{h}(u)= h(u,0,\tilde{\eta}(u)),$$ we obtain a vector field $\tilde{h}$ on the $(m-1)$-sphere and 
$$h(u,0,\eta)=0 \Leftrightarrow \tilde{h}(u)=0 \wedge \eta(u)= - \frac{\langle Q_k(u), u\rangle}{f_{0\lambda}+\mu f_{1\lambda}}.$$

Now, we see how to make the correspondence between a zero of the vector field $\tilde{h}$ and a bifurcation branch of $f$ on $N$. For this purpose, we will assume for a zero $\tilde{u}\in S^{m-1}$ of $\tilde{h}$ that
\begin{equation}\label{eq:H2}
\left(\frac{\partial h}{\partial (u,\eta)}\right)_{(\tilde{u},0,\tilde{\eta}(\tilde{u}))} \textrm{ is non-singular},
\end{equation}
where the Jacobian is calculated in the geometry of $S^{m-1}\times\mathbb{R}$. 

Moreover, we say that condition \ref{eq:H1} holds for $f$ and $N$ if
\begin{equation}
\label{eq:H1}
\tag{H1}
\forall_{\tilde{u}}\ \  \tilde{h}(\tilde{u})=0 \Rightarrow \ref{eq:H2} \textrm{ holds for } \tilde{u}.
\end{equation}

\begin{prop}\label{prop:bifbrfromzero}
Let $N$ be a regular network with valency $\vartheta$ and adjacency matrix $A$, $\mu$ a semisimple eigenvalue of $A$ with multiplicity $m$, $f:\mathbb{R}\times\mathbb{R}^\vartheta\times\mathbb{R}\rightarrow \mathbb{R}$ a regular function with a bifurcation condition associated to $\mu$ and $\tilde{u}\in S^{m-1}$ such that $\tilde{h}(\tilde{u})=0$ and \ref{eq:H2} holds for $\tilde{u}$. Then generically there exists a bifurcation branch of $f$ on $N$.
\end{prop}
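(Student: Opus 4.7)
The plan is to apply the Implicit Function Theorem to $h$ at the point $(\tilde u,0,\tilde\eta(\tilde u))$ and then undo the blow-up \ref{eq:blowuptrans} and the Lyapunov-Schmidt reduction to recover a bifurcation branch of $f$ on $N$. First I would verify that $h(\tilde u,0,\tilde\eta(\tilde u))=0$: the tangential component of this equation is exactly $\tilde h(\tilde u)=0$, and the radial component is solved by the definition of $\tilde\eta$. Since the partial Jacobian of $h$ with respect to $(u,\eta)$ at this point is non-singular by hypothesis \ref{eq:H2}, the IFT (applied in the manifold $S^{m-1}\times\mathbb{R}$) yields smooth curves $\epsilon\mapsto u(\epsilon)\in S^{m-1}$ and $\epsilon\mapsto\eta(\epsilon)\in\mathbb{R}$, defined for $\epsilon$ in a neighborhood of $0$, with $u(0)=\tilde u$, $\eta(0)=\tilde\eta(\tilde u)$ and $h(u(\epsilon),\epsilon,\eta(\epsilon))=0$.

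Next, setting $y(\epsilon)=\epsilon\,u(\epsilon)$ and $\lambda(\epsilon)=\epsilon^{k-1}\eta(\epsilon)$ with $k=k(N,\mu)$, the equivalence \ref{eq:blowupofg} gives $g(y(\epsilon),\lambda(\epsilon))=0$ for every $\epsilon>0$ small, and the identity extends to $\epsilon=0$ since $y(0)=0$ and $g(0,0)=0$. The Lyapunov-Schmidt reduction provides a smooth map $W$ that assigns to $(y,\lambda)$ the unique element of a complement of $\ker(J_f^N)$ completing $y_1 v_1+\dots+y_m v_m$ to a zero of $f^N(\cdot,\lambda)$. Defining $b_N(\epsilon)=y_1(\epsilon) v_1+\dots+y_m(\epsilon) v_m + W(y(\epsilon),\lambda(\epsilon))$ and $b_\lambda(\epsilon)=\lambda(\epsilon)$ produces a smooth curve $b=(b_N,b_\lambda):[0,\delta[\,\to\mathbb{R}^{|N|}\times\mathbb{R}$ satisfying $f^N(b_N(\epsilon),b_\lambda(\epsilon))=0$ and $b(0)=(0,0)$, which is a candidate equilibrium branch of $f$ on $N$.

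Finally I would verify the two non-triviality conditions in the definition of a bifurcation branch. The projection of $b_N(\epsilon)$ onto $\ker(J_f^N)$ is $\epsilon\bigl(u_1(\epsilon) v_1+\dots+u_m(\epsilon) v_m\bigr)$, which is non-zero for $\epsilon>0$ because $u(\epsilon)\in S^{m-1}$; in particular $b_N(\epsilon)\neq 0$. The requirement $b_\lambda(\epsilon)\neq 0$ is equivalent, via $b_\lambda(\epsilon)=\epsilon^{k-1}\eta(\epsilon)$ and $\eta(0)=\tilde\eta(\tilde u)$, to $\tilde\eta(\tilde u)\neq 0$. This is the only place where a substantive genericity assumption is needed, and it is the main obstacle: using the formula $\tilde\eta(\tilde u)=-\langle Q_k(\tilde u),\tilde u\rangle/(f_{0\lambda}+\mu f_{1\lambda})$, non-vanishing of $\tilde\eta(\tilde u)$ follows from the standard transversality $f_{0\lambda}+\mu f_{1\lambda}\neq 0$ together with $\langle Q_k(\tilde u),\tilde u\rangle\neq 0$, and both inequalities hold for $f$ in a residual subset of the regular functions with bifurcation condition associated to $\mu$. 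On this residual set $b$ is then a genuine bifurcation branch of $f$ on $N$, as required.
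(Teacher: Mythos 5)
Your construction is the same as the paper's for the main body of the argument: check that $h(\tilde u,0,\tilde\eta(\tilde u))=0$, apply the implicit function theorem on $S^{m-1}\times\mathbb{R}$ using \ref{eq:H2} to get curves $u^*(\epsilon),\eta^*(\epsilon)$, undo the blow-up to get $g(\epsilon u^*(\epsilon),\epsilon^{k-1}\eta^*(\epsilon))=0$, and undo the Lyapunov--Schmidt reduction to get a curve $b$ of equilibria with $b_N(\epsilon)\neq 0$ for $\epsilon>0$. All of that is fine and matches the paper.

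The gap is in your treatment of the condition $b_\lambda(\epsilon)\neq 0$. First, the reduction of this condition to $\tilde\eta(\tilde u)\neq 0$ is not an equivalence: $\eta^*(0)=0$ does not prevent $\eta^*(\epsilon)$ from being nonzero for all small $\epsilon>0$, so you are discarding cases the proposition is meant to cover. Second, and more seriously, $\langle Q_k(\tilde u),\tilde u\rangle\neq 0$ cannot be obtained by restricting $f$ to a residual set, because $\tilde u$ is not a fixed point of the sphere --- it is a zero of $\tilde h$, which depends on $f$ --- and there are zeros at which $Q_k(\tilde u)=0$ for \emph{every} regular $f$ for structural reasons: if $\tilde u_1v_1+\dots+\tilde u_mv_m$ lies in a synchrony subspace $\Delta_{\bowtie}$ with $k(N/\bowtie,\mu)>k(N,\mu)$, then $Q_k$ vanishes identically along that subspace (this is exactly what happens at the zeros $(0,\pm 1)$ in \ref{exe:latbalcoldegdet}). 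You have not shown that such a zero cannot also satisfy \ref{eq:H2}, so your genericity claim does not close the argument. The paper closes it without any extra hypothesis: because $f$ is regular, the origin is an isolated zero of $f^N(\cdot,0)$; if $b_\lambda(\epsilon)$ vanished for $\epsilon>0$ arbitrarily small, the points $b_N(\epsilon)\neq 0$ would be zeros of $f^N(\cdot,0)$ accumulating at the origin, a contradiction. Hence there is $\delta>0$ with $b_\lambda(\epsilon)\neq 0$ on $]0,\delta[$, and $b$ restricted to $[0,\delta[$ is a bifurcation branch. Replacing your last paragraph with this observation repairs the proof.
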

 
\begin{proof}
We have that $h(\tilde{u},0,\tilde{\eta}(\tilde{u}))=0$, because $\tilde{h}(\tilde{u})=0$.
Since \ref{eq:H2} holds for $\tilde{u}$, it follows from the implicit function theorem that there exists a neighborhood $W\subseteq \mathbb{R}$ of $\epsilon=0$ and a differentiable function $(u^*,\eta^*):W\rightarrow S^{m-1}\times \mathbb{R}$ such that $\tilde{h}(u^*(\epsilon),\epsilon,\eta^*(\epsilon))=0$ and $(u^*,\eta^*)(0)=(\tilde{u},\tilde{\eta})$. Recalling \ref{eq:blowuptrans} and \ref{eq:blowupofg}, we have that
$$h(u^*(\epsilon),\epsilon,\eta^*(\epsilon))=0 \Leftrightarrow g(\epsilon u^*(\epsilon),\epsilon^{k-1}\eta^*(\epsilon))=0 \Leftrightarrow g(y^*(\epsilon),\lambda^*(\epsilon))=0,$$ 
defining $(y^*,\lambda^*): W\rightarrow \mathbb{R}^{m}\times \mathbb{R}$ by $y^*(\epsilon)=\epsilon u^*(\epsilon)$ and $\lambda^*(\epsilon)=\epsilon^{k-1}\eta^*(\epsilon)$.

By the Lyapunov-Schmidt reduction, there exists a differentiable function $b^*:W\rightarrow \mathbb{R}^{|N|}\times\mathbb{R}$ associated to $(y^*,\lambda^*)$ such that $0\in W$, $b^*(0)=(0,0)$, $b^*_N(\epsilon)\neq 0$ and $f^N(b_N^*(\epsilon),b_\lambda^*(\epsilon))=0$, for every $\epsilon \in W\setminus \{0\}$. Since the origin is an isolated zero of $f^N(x,0)$, there exists $\delta>0$ such that $b_\lambda^*(\epsilon)\neq 0$ for $0<\epsilon< \delta$. Restricting the function $b^*$ to $[0,\delta[$, we have that $b^*$ is a bifurcation branch of $f$ on $N$. 
\end{proof}

\begin{obs}
\ref{eq:H2} is related to the determinacy of a bifurcation problem. When the kernel of the Jacobian is one-dimensional, the determinacy and degeneracy are equal. If $m=1$ and $f_{0\lambda}+\mu f_{1\lambda}\neq 0$, then \ref{eq:H2} holds.
When $m>1$, determinacy and degeneracy can be different, see \ref{exe:latbalcoldegdet}. 
\end{obs}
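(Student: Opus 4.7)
The plan is to verify the concrete mathematical assertion of the remark---that when $m=1$ and $f_{0\lambda}+\mu f_{1\lambda}\neq 0$, condition \ref{eq:H2} holds at every zero of $\tilde{h}$---and to explain why for $m=1$ the notions of determinacy and degeneracy collapse together. The whole argument is a direct specialization of the setup of \ref{sec:sbl} to the one-dimensional kernel case.

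First I would record that for $m=1$ the sphere $S^{m-1}=S^0=\{-1,+1\}$ consists of two isolated points, so its tangent space at each point is trivial. Consequently the Jacobian appearing in \ref{eq:H2}, computed in the geometry of $S^{m-1}\times\mathbb{R}$, reduces to the scalar partial derivative $\partial h/\partial\eta$ alone. From the explicit formula
$$h(u,\epsilon,\eta)=\eta(f_{0\lambda}+\mu f_{1\lambda})u+Q_k(u)+\mathcal{O}(|\epsilon|),$$
we obtain $\partial h/\partial\eta=(f_{0\lambda}+\mu f_{1\lambda})u$, whose values at the two points $u=\pm 1$ of $S^0$ are $\pm(f_{0\lambda}+\mu f_{1\lambda})$. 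Under the hypothesis $f_{0\lambda}+\mu f_{1\lambda}\neq 0$ this is nonzero, so \ref{eq:H2} holds at both possible zeros of $\tilde{h}$ on $S^0$, which is precisely the claim.

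For the equality of degeneracy and determinacy when $m=1$, I would analyze the reduced equation $g(y,\lambda)=0$ directly. Writing $Q_k(y)=c\,y^k$ with $c\neq 0$ by the definition of $k=k(N,f)$, and using $L'(0)=f_{0\lambda}+\mu f_{1\lambda}\neq 0$, division by $y$ for $y\neq 0$ gives
$$L(\lambda)+c\,y^{k-1}+\mathcal{O}(y^{k}+y|\lambda|)=0.$$
The implicit function theorem then produces a unique smooth branch $\lambda(y)\sim -c\,y^{k-1}/(f_{0\lambda}+\mu f_{1\lambda})$. Since the entire local bifurcation diagram is controlled by the pair $(L'(0),c)$, truncating $g$ beyond degree $k$ cannot alter the qualitative picture; thus the determinacy of the bifurcation problem is governed by the same integer $k$ as the degeneracy $k-1$, so the two indices coincide.

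The main obstacle is largely cosmetic: the phrase ``Jacobian in the geometry of $S^{m-1}\times\mathbb{R}$'' is not spelled out pointwise in the preceding text, but the zero-dimensionality of $S^0$ trivializes any ambiguity one might worry about. The final sentence of the remark, asserting that determinacy and degeneracy can differ once $m>1$, is not proved here---it is illustrated by the examples cited immediately afterwards.
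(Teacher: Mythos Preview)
Your argument is correct. Note that in the paper this statement is a \emph{Remark} and carries no proof at all; the author simply asserts the claims and points to \ref{exe:latbalcoldegdet} for the $m>1$ phenomenon. Your write-up therefore supplies the justification the paper omits.

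The key step---that for $m=1$ the manifold $S^0$ is zero-dimensional, so the Jacobian in \ref{eq:H2} collapses to the single scalar $\partial h/\partial\eta\big|_{\epsilon=0}=(f_{0\lambda}+\mu f_{1\lambda})u$, which is $\pm(f_{0\lambda}+\mu f_{1\lambda})\neq 0$ at $u=\pm 1$---is exactly the computation one would expect the author had in mind. Your explanation of why degeneracy and determinacy coincide when $m=1$ (the reduced problem $L(\lambda)y+cy^{k}+\cdots=0$ with $L'(0)\neq 0$, $c\neq 0$ is fully governed by its $k$-jet) is also the standard reading of the remark and is sound. The closing observation that the $m>1$ claim is handled by example rather than proof is accurate.
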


If condition~\ref{eq:H2} fails, we can apply the Lyapunov-Schmidt Reduction to the function $h(u,\epsilon,\eta)$ at $(\tilde{u},0,\tilde{\eta})$ and a blow-up change of coordinates to obtain a vector field on a sphere, then we look for zeros of this new reduced vector field on a sphere associated to the parameter $\epsilon$. Since the Jacobian of $h$ with respect to $(u,\eta)$ at $(\tilde{u},0,\tilde{\eta}(\tilde{u}))$ is not identically null, the dimension of the problem will be further reduced and we will need to calculate derivatives of higher order of the original vector field $f^N$. In the new reduced vector field on a sphere, $\tilde{u}$ can continue or not to be an equilibrium. If it is not an equilibrium, then do not correspond to a bifurcation branch. If it continues to be an equilibrium and a similar condition to \ref{eq:H2} holds, then we obtain a bifurcation branch. If it continues to be an equilibrium and a similar condition to \ref{eq:H2} fails, we need to repeat the previous process. 
See e.g. \cite{RK87}.

If a zero $\tilde{u}$ of $\tilde{h}$ corresponds to a point in some synchrony subspace $\Delta_{\bowtie}$ ($\tilde{u}_1 v_1 +\dots+ \tilde{u}_m v_m \in \Delta_{\bowtie}$) such that $k(N/\bowtie,\mu)>k(N,\mu)$, then \ref{eq:H2} fails at $\tilde{u}$. In this case, we should study the bifurcation problem of $f$ on $N/\bowtie$, or look for zeros of $\tilde{h}$ which do not correspond to a point in $\Delta_{\bowtie}$.

For submaximal colorings, we use condition \ref{eq:H1li}. We say that condition \ref{eq:H1li} holds for $f$ and $N$ if
\begin{equation}
\label{eq:H1li}
\tag{H1a}
\forall_{\tilde{u},\bowtie}\ \   \tilde{h}(\tilde{u})=0 \wedge \bowtie_{=}\prec \bowtie \wedge (\tilde{u}_1 v_1 + \dots +\tilde{u}_m v_m) \notin \Delta_{\bowtie} \Rightarrow \ref{eq:H2} \textrm{ holds for } \tilde{u}.
\end{equation}

\begin{exe}\label{exe:latbalcoldegdet}
As in \ref{exe:latbalcol}, consider again the network \#51 of \cite{K09}. The eigenvalue $0$ of $A_{51}$ is semisimple and has multiplicity $2$. The balanced coloring $\bowtie_=$ is $0$-submaximal of type $2$ with $0$-simple components $\bowtie_3$ and $\bowtie_4$. Let $f:\mathbb{R}\times\mathbb{R}^2\times\mathbb{R}\rightarrow \mathbb{R}$ be a generic regular function with a bifurcation condition associated to $0$, i.e., $f_0=0$. We have that $\dim(\ker(J_f^{N_{51}}))=2$. We choose a basis of $\ker(J_f^{N_{51}})$ such that $v_1\in \Delta_{\bowtie_3}$ and $v_2\in \Delta_{\bowtie_4}$. Let $v_1=(0, 0, 1, 0)\in \Delta_{\bowtie_3}$, $v_2=(-1, 1, 0, 0)\in \Delta_{\bowtie_4}$, $v^*_1=(0, 0, 1, -1)$ and $v^*_2=(-1, 1, 0, 0)/2$, where $v_1^*,v_2^*$ is a basis of $\ran( J_f^{N_{51}})^{\bot}$.
Then
$$h(u_1,u_2,\epsilon,\eta)=\left[\begin{matrix}
\displaystyle f_{0\lambda}\eta u_1 + \frac{f_{00}u_1^2}{2} \\
 \displaystyle  f_{0\lambda}\eta u_2
\end{matrix}\right] + \mathcal{O}(\epsilon).$$
Moreover, $h(u_1,u_2,0,\eta)=0$ if and only if  $(u_1,u_2,\eta)= (\pm 1,0,\mp f_{00}/(2f_{0\lambda}))$ or $(u_1,u_2,\eta)=(0,\pm 1,0)$. We have  
$$\left(\frac{\partial h}{\partial (u,\eta)}\right)_{(\pm 1,0,\mp \frac{f_{00}}{(2f_{0\lambda})})}= \left[\begin{matrix}
\displaystyle 0 & \pm f_{0\lambda} \\
 \displaystyle  \mp \frac{f_{00}}{2} & 0
\end{matrix}\right],$$
$$\left(\frac{\partial h}{\partial (u,\eta)}\right)_{(0,\pm 1,0)}= \left[\begin{matrix}
\displaystyle 0 & 0 \\
 \displaystyle  0 & \pm f_{0\lambda} 
\end{matrix}\right].$$

Since \ref{eq:H2} holds at the zeros $(\pm 1,0)$, then by \ref{prop:bifbrfromzero} there is a bifurcation branch with exactly synchrony $\bowtie_3$ . However condition \ref{eq:H2} fails at the zeros $(0,\pm 1)$ and the dimension of the kernel is equal to $1$. We could apply the Lyapunov-Schmidt Reduction and obtain an one-dimensional bifurcation problem, then we should solve it by finding the lowest non-vanishing terms of the reduced equation. Alternatively, we note that the zero $(0, 1)$ corresponds to a point in the synchrony subspace $\Delta_{\bowtie_4}$ and that $k(N/\bowtie_4, 0)=3$. We can also obtain the bifurcation branch of $f$ on $N$ with synchrony $\bowtie_4$ by studying the bifurcation problem of $f$ on $N/\bowtie_4$.
\end{exe}

\section{Synchrony Branching Lemma -- Odd Dimensional Case}\label{subsec:oddsbl}

Now, we prove the analogous version of the odd dimensional version of the Equivariant Branch Lemma for regular networks.
Recall the notation of \ref{sec:sbl}. 

\begin{teo}\label{prop:bifbraoddker}
Let $N$ be a regular network with valency $\vartheta$ and adjacency matrix $A$, $\bowtie$ a balanced coloring of $N$ and $\mu$ a semisimple eigenvalue of $A$. 
Let $f:\mathbb{R}\times\mathbb{R}^\vartheta\times\mathbb{R}\rightarrow \mathbb{R}$ be a regular function with a bifurcation condition associated to $\mu$ such that $\ker(J_f^N)\cap\Delta_{\bowtie}$ has odd dimension. Assume that condition \ref{eq:H1} holds for $f$ and $N/\bowtie$.
Then generically there is a bifurcation branch of $f$ on $N$ with at least synchrony $\bowtie$. 
Moreover, if $\bowtie$ is $\mu$-maximal, then the bifurcation branch has exactly synchrony $\bowtie$.
\end{teo}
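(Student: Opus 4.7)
The plan is to reduce the problem to finding a zero of a tangent vector field on an even-dimensional sphere, where existence is forced by the Poincar\'{e}--Hopf theorem. First, I pass to the quotient $Q = N/\bowtie$. By \ref{GST0552}, $f^N$ restricts to $f^Q$ on $\Delta_{\bowtie}$ and the adjacency matrix of $Q$ is the restriction of $A$ to $\Delta_{\bowtie}$. Since the restriction of a semisimple matrix to an invariant subspace is again semisimple, $\mu$ remains semisimple for $Q$ with multiplicity $m = \dim(\ker(J_f^N)\cap \Delta_{\bowtie})$, which is odd by hypothesis, and $f$ is still a regular function with a bifurcation condition associated to $\mu$ on $Q$.

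Next, I apply the Lyapunov--Schmidt reduction and the blow-up \ref{eq:blowuptrans} from \ref{sec:sbl} to $f^Q$, obtaining the tangent vector field $\tilde{h}$ on the sphere $S^{m-1}$. Because $m$ is odd, $S^{m-1}$ is even-dimensional and has nonzero Euler characteristic, so by the Poincar\'{e}--Hopf theorem $\tilde{h}$ has at least one zero $\tilde{u}\in S^{m-1}$. The hypothesis \ref{eq:H1} for $f$ and $Q$ ensures that \ref{eq:H2} holds at $\tilde{u}$, so \ref{prop:bifbrfromzero} applied on $Q$ yields generically a bifurcation branch of $f$ on $Q$. Lifting this branch via \ref{GST0552} produces a bifurcation branch $b$ of $f$ on $N$ with $b_N([0,\delta[)\subseteq \Delta_{\bowtie}$, i.e., with at least synchrony $\bowtie$.

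For the second assertion, suppose $\bowtie$ is $\mu$-maximal and, for contradiction, that $b_N([0,\delta[)\subseteq \Delta_{\bowtie'}$ for some $\bowtie'$ with $\bowtie\prec \bowtie'$. Then $b$ corresponds, via \ref{GST0552}, to a non-trivial branch of zeros of $f^{N/\bowtie'}$ near the origin. By $\mu$-maximality, $\mu$ is not an eigenvalue of the adjacency matrix of $N/\bowtie'$; since the eigenvalues of $J_f^{N/\bowtie'}$ have the form $f_0 + \mu'' f_1$ and, under the generic assumption $f_1 \neq 0$, the unique $\mu''$ producing a zero eigenvalue is $\mu$, the Jacobian $J_f^{N/\bowtie'}$ is invertible at the origin. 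The implicit function theorem then forces the trivial branch to be the unique branch of zeros of $f^{N/\bowtie'}$ near the origin, contradicting the non-triviality of $b$. Hence $b$ has exactly synchrony $\bowtie$.

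The main obstacle I anticipate is not the topological step itself, which is standard once the setup is in place, but the careful identification of the reduced equation on $Q$ with the restriction of the reduced equation on $N$ to $\Delta_{\bowtie}$, so that hypothesis \ref{eq:H1} on $Q$ is indeed what is needed to invoke \ref{prop:bifbrfromzero}, together with the verification that semisimplicity of $\mu$ is preserved under passage to $Q$ and to $N/\bowtie'$.
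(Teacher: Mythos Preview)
Your proposal is correct and follows essentially the same route as the paper: pass to the quotient $Q=N/\bowtie$, apply Lyapunov--Schmidt and the blow-up to obtain $\tilde{h}$ on $S^{m-1}$, use Poincar\'{e}--Hopf (with $m$ odd) to find a zero, invoke \ref{eq:H1} and \ref{prop:bifbrfromzero}, then lift. Your treatment of the $\mu$-maximal case via the implicit function theorem is slightly more explicit than the paper's one-line argument, and your final worry about identifying the reduced equation on $Q$ with a restriction from $N$ is unnecessary, since both you and the paper work directly on $Q$ and the hypothesis \ref{eq:H1} is stated for $N/\bowtie$.
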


\begin{proof}
Let $N$ be a regular network with valency $\vartheta$ and represented by the adjacency matrix $A$, $\bowtie$ a balanced coloring of $N$ and $\mu$ a semisimple eigenvalue of $A$. 
Let $f:\mathbb{R}\times\mathbb{R}^\vartheta\times\mathbb{R}\rightarrow \mathbb{R}$ be a regular function with a bifurcation condition associated to $\mu$ such that $\ker(J_f^N)\cap\Delta_{\bowtie}$ has odd dimension.

Denote by $Q$ the quotient network of $N$ with respect to $\bowtie$ which is represented by the adjacency matrix $A_Q$. Then $\mu$ is a semisimple eigenvalue of $A_Q$ and $m=\dim(\ker(J_f^Q))=\dim(\ker(J_f^N)\cap\Delta_{\bowtie})$ is odd.

We perform the calculation of \ref{sec:sbl} for the network $Q$ and the generic regular function $f$. Following \ref{sec:sbl}, let $\tilde{h}$ be the vector field in $S^{m-1}$ obtained. 
Since $m$ is odd, from the Poincar\'{e}-Hopf theorem \cite{M65}, we know that there exists at least one $\tilde{u}\in S^{m-1}$ such that $\tilde{h}(\tilde{u})=0$. Then $h(\tilde{u},0,\tilde{\eta}(\tilde{u}))=0$, where $\tilde{\eta}(\tilde{u})$ is defined in \ref{sec:sbl}.

Assuming that condition \ref{eq:H1} holds for $f$ and $N/\bowtie$, we have that \ref{eq:H2} holds for $\tilde{u}$. From \ref{prop:bifbrfromzero},  there exists a bifurcation branch of $f$ on $Q$. Last, we can lift this bifurcation branch of $f$ on $Q$ to a bifurcation branch of $f$ on $N$ with at least synchrony $\bowtie$.

If $\bowtie$ is $\mu$-maximal, then for every $\bowtie'$ such that $\bowtie\prec \bowtie'$ there is no bifurcation branch of $f$ on $N/\bowtie'$. Thus the bifurcation branch has exactly synchrony $\bowtie$.
\end{proof}

We establish the existence of a bifurcation branch, using the result above.

\begin{exe}\label{ex:kerdim3}
Let $N$ be the regular network given by the adjacency matrix:
$$A=\left[\begin{matrix}
 24 & 1 & 2 & 3 & 4 \\
 16 & 0 & 5 & 6 & 7 \\
  8 & 6 & 3 & 8 & 9 \\
  0 & 8 & 9 & 6 & 11\\
 20 & 3 & 4 & 5 & 2
\end{matrix}\right].$$
The eigenvalues of $A$ are: the network valency $34$ with multiplicity $1$; $13$ with multiplicity $1$; and $-4$ with multiplicity $3$.

Let $f:\mathbb{R}\times\mathbb{R}^{34}\times\mathbb{R}\rightarrow \mathbb{R}$ be a regular function with a bifurcation condition associated to the eigenvalue $\mu=-4$. Considering the trivial balanced coloring, $\bowtie_=$, we have that $\dim(\ker^*(J_f^N)\cap\Delta_{\bowtie})=3$. Assume that condition \ref{eq:H1} holds for $f$ and $N$. Then there exists a bifurcation branch of $f$ on $N$, by \ref{prop:bifbraoddker}.

The network $N$ has no non-trivial balanced colorings and the unique eigenvalue of the adjacency matrix associated to $N/\bowtie_0$ is $34$. So $\bowtie_{=}$ is $(-4)$-maximal and the bifurcation branch has no synchrony. 
\end{exe}

In the next two examples, we do not explicitly present the networks but we assume that the networks satisfy some conditions. Since the number of cells in a network is not restricted, those conditions may be solved with as many variables as it is needed to consider. The next example shows that we cannot remove the odd dimension condition in \ref{prop:bifbraoddker}.

\begin{exe}\label{exe:ker2deg3nobifbra}
Let $N$ be a network with adjacency matrix $A$, $\mu$ a semisimple eigenvalue of $A$ with multiplicity $2$. Let $f$ be a generic regular function with a bifurcation condition associated to $\mu$,  $(v_1,v_2)$ be a basis for $\ker(J_f^N)$ and $(v^*_1,v^*_2)$ be a basis for $\ran( J_f^N)^{\bot}$. We will assume that $A$, $(v_1,v_2)$ and $(v^*_1,v^*_2)$ respect the following conditions:
$$\langle v^*_1,v_1 \rangle=\langle v^*_2,v_2 \rangle=1, \quad \quad \langle v^*_2,v_1 \rangle=\langle v^*_1,v_2 \rangle=0,$$
$$\langle v^*_i,v_1\ast v_1 \rangle=\langle v^*_i,v_1\ast v_2 \rangle=\langle v^*_i,v_2\ast v_2 \rangle=0,\quad i=1,2,$$
$$\langle v^*_1, v_1\ast v_1\ast v_1)\rangle =\langle v^*_2, v_1\ast v_1\ast v_2)\rangle,\quad \langle v^*_1, v_2\ast v_2\ast v_1)\rangle=\langle v^*_2, v_2\ast v_2\ast v_2)\rangle,$$
$$\langle v^*_1,  3 (v_1\ast (A(v_1\ast v_1)))\rangle =\langle v^*_2,  2 v_1\ast (A(v_1\ast v_2))+ v_2\ast (A(v_1\ast v_1))\rangle,$$
$$\langle v^*_2,  3 (v_2\ast (A(v_2\ast v_2)))\rangle=\langle v^*_1,  2 v_2\ast (A(v_2\ast v_1))+ v_1\ast (A(v_2\ast v_2))\rangle,$$
$$0\neq \langle v^*_1, v_1\ast v_1\ast v_2)\rangle =\langle v^*_1, v_2\ast v_2\ast v_2)\rangle=-\langle v^*_2, v_1\ast v_2\ast v_2)\rangle =-\langle v^*_2, v_1\ast v_1\ast v_1)\rangle,$$
$$\begin{aligned} 
0 \neq \langle v^*_1,  2 v_1\ast (A(v_1\ast v_2))+ v_2\ast (A(v_1\ast v_1))\rangle&=\langle v^*_1,  3 (v_2\ast (A(v_2\ast v_2)))\rangle=\\=-\langle v^*_2,  2 v_2\ast (A(v_1\ast v_2))+ v_1\ast (A(v_2\ast v_2))\rangle&=-\langle v^*_2,  3 (v_1\ast (A(v_1\ast v_1)))\rangle.\end{aligned}$$
Then $k(N,\mu)=3$. Let $g=(g_1,g_2):\mathbb{R}^2\times \mathbb{R}\rightarrow \mathbb{R}^2$ be the reduced equation obtained by the Lyapunov-Schmidt Reduction. The previous equalities imply for every regular function $f$ that
$$\frac{\partial^3 g_1}{\partial y_1^3}=\frac{\partial^3 g_2}{\partial y_1^2 \partial y_2},\quad \quad \frac{\partial^3 g_1}{\partial y_1 \partial y_2^2}=\frac{\partial^3 g_2}{\partial y_2^3},$$
$$\alpha=\frac{\partial^3 g_1}{\partial y_1^2\partial y_2}=\frac{\partial^3 g_1}{\partial y_2^3}=-\frac{\partial^3 g_2}{\partial y_1^3}=-\frac{\partial^3 g_2}{\partial y_1 \partial y_2^2}\neq 0$$
at $(y_1,y_2,\lambda)=(0,0,0)$. Then the vector field, $\tilde{h}$, on the $1$-sphere is given by
$$\tilde{h}(u_1,u_2)=\left[\begin{matrix}
\alpha u_2(u_1^2+u_2^2)^2\\
- \alpha u_1(u_1^2+u_2^2)^2
\end{matrix}\right]= \alpha \left[\begin{matrix}
u_2\\
- u_1
\end{matrix}\right],$$
where $(u_1,u_2)\in S^1$. We have that $\tilde{h}(u_1,u_2)\neq 0$ for every $(u_1,u_2)\in S^1$. By \ref{prop:bifbrtozero}, there is no bifurcation branch of $f$ on $N$.
\end{exe}

In the next example, we lift the network of the previous example to a network with one more cell. This example shows that there are zeros of the vector field on the sphere which do not correspond to a bifurcation branch and the relevance of \ref{eq:H2} in \ref{prop:bifbrfromzero}.

\begin{exe}
Let $N$, $A$, $\mu$, $v_1$, $v_2$, $v^*_1$ and $v^*_2$ as in \ref{exe:ker2deg3nobifbra}. Assume that $\mu=0$. Fix a cell $c$ of $N$. Let $\hat{N}$ be the network with $|N|+1$ cells given by the following adjacency matrix:
$$\hat{A}=\left[\begin{matrix}
A & 0\\ 
R & 0 
\end{matrix}\right],$$
where $R=(A_{c\:d})_{d\in N}$ is a $1\times |N|$-matrix. Denote by $c'$ the new cell of $\hat{N}$ and by $\bowtie$ the balanced coloring of $\hat{N}$ given by $c\bowtie c'$. The network $N$ is a quotient network of $\hat{N}$ with respect to $\bowtie$.
Note that $0$ is a semisimple eigenvalue of $\hat{A}$ with multiplicity $3$.

Let $f$ be a generic regular function with a bifurcation condition associated to $\mu=0$, $\hat{v}_1=(v_1,(v_1)_c)$, $\hat{v}_2=(v_2,(v_2)_c)$, $\hat{v}_3=(0,\dots,0,1)$, $\hat{v}^*_1=(v_1^*,0)$, $\hat{v}^*_2=(v_2^*,0)$ and $\hat{v}^*_3\in\mathbb{R}^{|\hat{N}|}$ such that $(\hat{v}^*_3)_c=-1$, $(\hat{v}^*_3)_{c'}=1$ and $(\hat{v}^*_3)_{d}=0$ otherwise. Then $(\hat{v}_1,\hat{v}_2, \hat{v}_3)$ is a basis of $\ker(J_f^{\hat{N}})$ and $(\hat{v}^*_1,\hat{v}^*_2, \hat{v}^*_3)$ is a basis of $\ran( J_f^{\hat{N}})^{\bot}$. Let $\hat{g}=(\hat{g}_1,\hat{g}_2,\hat{g}_3):\mathbb{R}^3\times \mathbb{R}\rightarrow \mathbb{R}^3$ be the reduced equation obtained by the Lyapunov-Schmidt Reduction of $f^{\hat{N}}$. Note that 
$$\frac{\partial^2 g_3}{\partial y_1\partial y_3}=f_{00}(v_1)_c,\quad \quad\frac{\partial^2 g_3}{\partial y_2\partial y_3}=f_{00}(v_2)_c,\quad \quad\frac{\partial^2 g_3}{\partial y_3^2}=f_{00},\quad \quad \frac{\partial^2 g_i}{\partial y_j \partial y_l}=0,$$
for any other $i$, $j$ and $l$. Then $k(\hat{N},0)=2$ and 
$$h(u_1,u_2,u_3,0,\eta)=\left[\begin{matrix}
f_{0\lambda}\eta u_1\\
f_{0\lambda}\eta u_2\\
\displaystyle f_{0\lambda}\eta u_3+\frac{f_{00}}{2}u_3^2 + f_{00}(v_1)_c u_1 u_3 + f_{00}(v_2)_c u_2 u_3
\end{matrix}\right],$$
where $(u_1,u_2,u_3)\in S^2$. 
We have that $h(0,0, 1,- f_{00}/(2f_{0\lambda}))= 0$, condition~\ref{eq:H2} holds for $(0,0, 1)$ and it leads to a bifurcation branch of $f$ on $\hat{N}$. 

On the other hand $h(u_1,u_2,0,0)=0$, condition~\ref{eq:H2} does not hold for $(u_1,u_2,0)$, where $(u_1,u_2,0,0)\in S^{2}\times\mathbb{R}$, and it does not lead to a bifurcation branch of $f$ on $\hat{N}$. Otherwise this bifurcation branch would be inside $\Delta_{\bowtie}\subseteq \mathbb{R}^{|\hat{N}|}$, since $\hat{v}_1,\hat{v}_2\in \Delta_{\bowtie}$, and would lead to a bifurcation branch of $f$ on $N$. But, as we saw, there is no bifurcation branch of $f$ on $N$.
\end{exe}

\section{Synchrony Branching Lemma -- Two dimensional case}\label{subsec:twosbl}

In this section, we study the smallest case not included in the results of the previous section, i.e., when the semisimple eigenvalue $\mu$ has multiplicity $m=2$ and $k(N,\mu)$ is even. 
We give conditions for the existence of bifurcation branches with maximal or submaximal synchrony. 

Let $N$ be a regular network with valency $\vartheta$ and represented by the adjacency matrix $A$, $\mu$ a semisimple eigenvalue of $A$ and $f:\mathbb{R}\times\mathbb{R}^\vartheta\times\mathbb{R}\rightarrow \mathbb{R}$ a generic regular function with a bifurcation condition associated to $\mu$ such that $m=\dim(\ker(J_f^N))=2$.

Taking into account the calculations in \ref{sec:sbl}, when $m=\dim(\ker(J_f^N)=2$, we have the following vector field on the $1$-sphere
$$\tilde{h}(u_1,u_2)= Q_k(u_1,u_2)-\langle Q_k(u_1,u_2),(u_1,u_2)\rangle (u_1,u_2),$$
where $u_1^2+u_2^2=1$, $k=k(N,\mu)$. Transforming the variables $(u_1,u_2)$ to the angular variable $\theta$, where $(u_1,u_2)=(\cos (\theta),\sin(\theta))$. The dynamical system $\dot{u}=\tilde{h}(u)$ is equivalent to the dynamical system $\dot{\theta}=\Theta(\theta)$ given by
$$\Theta(\theta)= \cos(\theta) q_2(\cos(\theta),\sin(\theta))-\sin(\theta) q_1(\cos(\theta),\sin(\theta)),$$
where $Q_k(u_1,u_2)=(q_1(u_1,u_2),q_2(u_1,u_2))$. So, finding zeros of $\tilde{h}$ is equivalent to solve the equation
$$\Theta(\theta)=0.$$

\begin{teo}\label{prop:bifbranchdim2max}
Let $N$ be a regular network with valency $\vartheta$ and adjacency matrix $A$, $\bowtie$ a balanced coloring of $N$ and $\mu$ a semisimple eigenvalue of $A$. Let $f:\mathbb{R}\times\mathbb{R}^\vartheta\times\mathbb{R}\rightarrow \mathbb{R}$ be a regular function with a bifurcation condition associated to $\mu$ and such that $\dim(\ker(J_f^N)\cap\Delta_{\bowtie})=2$. Assume that condition \ref{eq:H1} holds for $f$ and $N/\bowtie$. If $\bowtie$ $\mu$-maximal and $k(N/\bowtie,\mu)$ is even, then generically there is a bifurcation branch of $f$ on $N$ with exactly synchrony $\bowtie$.
\end{teo}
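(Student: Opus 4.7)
The plan mirrors the strategy of \ref{prop:bifbraoddker}, replacing the Poincar\'{e}--Hopf argument (which yielded a zero of $\tilde h$ on an odd-dimensional sphere) by a parity argument on $S^1$ exploiting the hypothesis that $k := k(N/\bowtie, \mu)$ is even. First I would pass to the quotient $Q = N/\bowtie$, where $\mu$ remains a semisimple eigenvalue and $\dim \ker(J_f^Q) = \dim(\ker(J_f^N) \cap \Delta_{\bowtie}) = 2$. Applying the Lyapunov--Schmidt reduction and blow-up from \ref{sec:sbl} to $Q$, the problem of locating zeros of the reduced equation is transformed into locating zeros on $S^1$ of the scalar function
$$\Theta(\theta) = \cos(\theta)\, q_2(\cos\theta, \sin\theta) - \sin(\theta)\, q_1(\cos\theta, \sin\theta),$$
where $Q_k = (q_1, q_2)$ is the pair of degree-$k$ homogeneous components introduced at the beginning of \ref{subsec:twosbl}.

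The decisive observation is the parity identity
$$\Theta(\theta + \pi) = (-1)^{k+1}\,\Theta(\theta),$$
which follows from a direct computation using $q_i(-u) = (-1)^k q_i(u)$ for $i=1,2$. Since $k$ is even, this yields $\Theta(\theta + \pi) = -\Theta(\theta)$, so generically $\Theta$ takes both signs, and by the intermediate value theorem there exists $\theta^* \in [0, \pi)$ with $\Theta(\theta^*) = 0$. Setting $\tilde u = (\cos\theta^*, \sin\theta^*) \in S^1$ produces the required zero of $\tilde h$.

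Since \ref{eq:H1} is assumed to hold for $f$ and $N/\bowtie$, the non-degeneracy condition \ref{eq:H2} is satisfied at $\tilde u$, so \ref{prop:bifbrfromzero} applied to $Q$ yields a bifurcation branch of $f$ on $Q$. By \ref{GST0552} this branch lifts to a bifurcation branch of $f$ on $N$ whose image lies in $\Delta_\bowtie$, that is, with synchrony at least $\bowtie$. To upgrade this to \emph{exactly} synchrony $\bowtie$, I would invoke $\mu$-maximality: for every balanced coloring $\bowtie'$ with $\bowtie \prec \bowtie'$, the eigenvalue $\mu$ does not appear in the spectrum of the adjacency matrix of $N/\bowtie'$, and hence generically no bifurcation branch of $f$ on $N$ can lie in the smaller synchrony subspace $\Delta_{\bowtie'}$. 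The main obstacle is the parity identity for $\Theta$, which is what enables the even-$k$ case to go through despite the kernel being even-dimensional; once it is established, the remainder of the argument consists of directly invoking \ref{prop:bifbrfromzero} and the lifting property.
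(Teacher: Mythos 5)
Your proposal is correct and follows essentially the same route as the paper: pass to the quotient $N/\bowtie$, reduce to the scalar equation $\Theta(\theta)=0$ on $S^1$, use the evenness of $k$ to get $\Theta(\theta+\pi)=-\Theta(\theta)$ and hence a zero by the intermediate value theorem, then apply \ref{eq:H1}, \ref{prop:bifbrfromzero}, lifting, and $\mu$-maximality. The only cosmetic difference is that the sign change of $\Theta$ is a direct consequence of the parity identity, so the word ``generically'' is not needed at that step.
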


\begin{proof}
Let $N$ be a regular network with valency $\vartheta$, $\bowtie$ a balanced coloring of $N$ and $\mu$ a semisimple eigenvalue of the adjacency matrix of $N$. 
Take $f:\mathbb{R}\times\mathbb{R}^\vartheta\times\mathbb{R}\rightarrow \mathbb{R}$ to be a regular function with a bifurcation condition associated to $\mu$ such that $\dim(\ker(J_f^N)\cap\Delta_{\bowtie})=2$.

Let $A_{\bowtie}$ to be the adjacency matrix of $N/\bowtie$. Then $m=\dim(\ker(J_f^{N/\bowtie}))=\dim(\ker(J_f^N)\cap\Delta_{\bowtie})=2$ and $\mu$ is a semisimple eigenvalue of $A_{\bowtie}$.

Performing the calculations of \ref{sec:sbl} and of this section for the network $N/\bowtie$ and the regular function $f$, we consider the function $\Theta(\theta)$ given by
$$\Theta(\theta)= \cos(\theta) q_2(\cos(\theta),\sin(\theta))-\sin(\theta) q_1(\cos(\theta),\sin(\theta)),$$
where $Q_k(u_1,u_2)=(q_1(u_1,u_2),q_2(u_1,u_2))$ and $k=k(N/\bowtie,\mu)$. We look for solutions of $\Theta(\theta)=0$.

Suppose that $\bowtie$ is $\mu$-maximal and $k(N/\bowtie,\mu)$ is even. Then $Q_k(u_1,u_2)=Q_k(-u_1,-u_2)$ and
$$\Theta(\theta+\pi)=-\Theta(\theta).$$

By the intermediate value theorem, we know that it must exists $\tilde{\theta}$ such that $\Theta(\tilde{\theta})=0$. Consider $\tilde{u}=(\cos(\tilde{\theta}),\sin(\tilde{\theta}))$, then $\tilde{h}(\tilde{u})=0$. 
Assuming that condition~\ref{eq:H1} holds for $f$ and $N/\bowtie$, we know that \ref{eq:H2} holds for $\tilde{u}$. From \ref{prop:bifbrfromzero}, there exists a bifurcation branch of $f$ on $N/\bowtie$. Lifting this bifurcation branch, we obtain a bifurcation branch of $f$ on $N$ with at least synchrony $\bowtie$. Since $\bowtie$ is $\mu$-maximal this bifurcation branch of $f$ on $N$ has exactly synchrony $\bowtie$. 
\end{proof}

Returning to the beginning of this section. If $k(N,\mu)=2$, then 
\begin{equation}\label{eq:blowupdim2}
q_i(u_1,u_2)= \beta\left(\frac{d_{i11}}{2}u_1^2+ d_{i12}u_1u_2+\frac{d_{i22}}{2}u_2^2\right),
\end{equation}
where $i=1,2$, $\beta=(f_{00}+2\mu f_{01}+\mu f_{11}-\mu f_{1\vartheta}+\mu^2 f_{1\vartheta})$ and for $l_1,l_2=1,2$ $$d_{il_1 l_2}=\langle v^*_i,v_{l_1}\ast v_{l_2}\rangle.$$

\begin{teo}\label{prop:bifbranchdim2submax}
Let $N$ be a regular network with valency $\vartheta$ and adjacency matrix $A$, $\bowtie$ a balanced coloring of $N$ and $\mu$ a semisimple eigenvalue of $A$. Let $f:\mathbb{R}\times\mathbb{R}^\vartheta\times\mathbb{R}\rightarrow \mathbb{R}$ be a regular function with a bifurcation condition associated to $\mu$ and such that $\dim(\ker(J_f^N)\cap\Delta_{\bowtie})=2$. Assume that condition \ref{eq:H1li} holds for $f$ and $N/\bowtie$ and $k(N/\bowtie,\mu)=2$. 

\noindent
$(i)$ If $\bowtie$ is $\mu$-submaximal of type $1$. Then generically there is a bifurcation branch of $f$ on $N$ with  exactly synchrony $\bowtie$ if and only if 
\begin{equation}\label{eq:condsubmaxtype1}
(d_{222} -2d_{112})^2 \geq 4d_{122}(d_{111}-2d_{212}).
\end{equation}

\noindent
$(ii)$ If $\bowtie$ is $\mu$-submaximal of type $2$. Then generically there is a bifurcation branch of $f$ on $N$ with  exactly synchrony $\bowtie$ if and only if 
\begin{equation}\label{eq:condsubmaxtype2}
(2d_{212}- d_{111})(2d_{112}- d_{222})\neq 0.
\end{equation}

\end{teo}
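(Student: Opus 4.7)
The plan is to reduce to the quotient $Q=N/\bowtie$, on which $\mu$ is a semisimple eigenvalue of the adjacency matrix with multiplicity $m=2$ and $k(Q,\mu)=2$, apply the machinery of \ref{sec:sbl}, and analyze the zeros of the resulting vector field on $S^1$ by choosing a basis of $\ker(J_f^Q)$ adapted to the $\mu$-simple components. Using \ref{eq:blowupdim2} the angular equation is
$$
\Theta(\theta)=\tfrac{\beta}{2}\bigl[d_{211}\cos^3\theta+(2d_{212}-d_{111})\cos^2\theta\sin\theta+(d_{222}-2d_{112})\cos\theta\sin^2\theta-d_{122}\sin^3\theta\bigr],
$$
and by \ref{prop:bifbrtozero}--\ref{prop:bifbrfromzero} the bifurcation branches of $f$ on $N$ with exact synchrony $\bowtie$ correspond to zeros $\tilde\theta$ of $\Theta$ at which \ref{eq:H2} holds and whose image $\tilde u_1 v_1+\tilde u_2 v_2$ lies in none of the $\Delta_{\bowtie_i'}$, where $\bowtie_i'$ is the coloring of $Q$ induced by $\bowtie_i$.

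The preparatory step is to choose $v_1$ generating $\ker(J_f^Q)\cap\Delta_{\bowtie_1'}$ and, in case $(ii)$, $v_2$ generating $\ker(J_f^Q)\cap\Delta_{\bowtie_2'}$ (complete by an arbitrary $v_2$ in case $(i)$), and to take $v_1^*,v_2^*$ dual to this basis in $\ran(J_f^Q)^\bot$. The remark following \ref{lem:degnetstr} together with the invariance $f^Q(\Delta_{\bowtie_i'})\subseteq\Delta_{\bowtie_i'}$ (\ref{GST0552}) forces $g^Q_{i'}(y_i,0,\lambda)\equiv 0$ whenever $v_i\in\Delta_{\bowtie_i'}$, which in particular gives $d_{i'ii}=\langle v_{i'}^*,v_i\ast v_i\rangle=0$. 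Thus $d_{211}=0$ in case $(i)$ and $d_{211}=d_{122}=0$ in case $(ii)$, and $\Theta$ factors as
\begin{align*}
(i):\ &\Theta(\theta)=\tfrac{\beta}{2}\sin\theta\bigl[(2d_{212}-d_{111})\cos^2\theta+(d_{222}-2d_{112})\cos\theta\sin\theta-d_{122}\sin^2\theta\bigr],\\
(ii):\ &\Theta(\theta)=\tfrac{\beta}{2}\cos\theta\sin\theta\bigl[(2d_{212}-d_{111})\cos\theta+(d_{222}-2d_{112})\sin\theta\bigr].
\end{align*}

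Exact synchrony $\bowtie$ requires $\sin\tilde\theta\ne 0$ in case $(i)$ (otherwise $\tilde u_1v_1+\tilde u_2v_2\in\Delta_{\bowtie_1'}$), and $\cos\tilde\theta\sin\tilde\theta\ne 0$ in case $(ii)$. For case $(i)$, setting $r=\cos\tilde\theta/\sin\tilde\theta$, the bracket becomes a quadratic in $r$ whose discriminant is $(d_{222}-2d_{112})^2-4d_{122}(d_{111}-2d_{212})$; a real root exists exactly when \ref{eq:condsubmaxtype1} holds. For case $(ii)$, the linear bracket has a root with $\cos\theta\sin\theta\ne 0$ exactly when both of its coefficients are non-zero, i.e., \ref{eq:condsubmaxtype2}. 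In either case, a zero $\tilde\theta$ so produced has $\tilde u_1v_1+\tilde u_2v_2$ outside every $\Delta_{\bowtie'}$ with $\bowtie\prec\bowtie'$ (using that only the $\bowtie_i'$ can support the eigenvalue $\mu$, by submaximality), so \ref{eq:H1li} delivers \ref{eq:H2}; \ref{prop:bifbrfromzero} then gives a bifurcation branch of $f$ on $Q$ which lifts via \ref{GST0552} to a branch of $f$ on $N$ with exact synchrony $\bowtie$. Conversely, \ref{prop:bifbrtozero} extracts such a $\tilde\theta$ from any exact-synchrony-$\bowtie$ branch, forcing the bracket to vanish there and hence the corresponding condition on the $d_{ijl}$.

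The main obstacle is the basis-adapted vanishing $d_{i'ii}=0$: one must ensure that the Lyapunov--Schmidt reduction can be performed compatibly with the invariant decomposition $\Delta_{\bowtie_i'}=(\Delta_{\bowtie_i'}\cap\ker(J_f^Q))\oplus(\Delta_{\bowtie_i'}\cap\ran(J_f^Q))$, which holds because $\mu$ is semisimple (the spectral projector onto $\ker(J_f^Q)$ is a polynomial in $J_f^Q$ and therefore preserves $\Delta_{\bowtie_i'}$). A secondary obstacle is to absorb degenerate coincidences (both coefficients of the linear bracket in case $(ii)$ vanishing simultaneously, or the simultaneous vanishing $2d_{212}=d_{111}$ and $2d_{112}=d_{222}$ with $d_{122}\ne 0$ in case $(i)$) into the word ``generically''; these are non-generic polynomial relations among the network invariants $d_{ijl}$ and do not affect the equivalence claim for generic networks.
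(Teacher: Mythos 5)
Your proposal is correct and follows essentially the same route as the paper: reduce to the quotient $N/\bowtie$, choose a basis of $\ker(J_f^{N/\bowtie})$ adapted to the $\mu$-simple components so that $d_{211}=0$ (and $d_{122}=0$ in case $(ii)$), factor the angular equation $\Theta(\theta)=0$, solve the resulting quadratic (resp.\ linear) equation, and pass back and forth between zeros of $\tilde h$ and exact-synchrony branches via \ref{prop:bifbrtozero}, \ref{eq:H1li} and \ref{prop:bifbrfromzero}. The only (harmless) deviations are that you justify $d_{211}=0$ directly by the invariance of $\Delta_{\bowtie_1'}$ and the semisimple splitting — which the paper mentions only parenthetically, preferring to deduce it from \ref{prop:bifbraoddker} and \ref{prop:bifbrtozero} — and that you explicitly flag the degenerate coefficient coincidences that the paper's proof silently absorbs into genericity.
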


\begin{proof}
Let $N$ be a regular network with valency $\vartheta$ and adjacency matrix $A$, $\bowtie$ a balanced coloring of $N$ and $\mu$ a semisimple eigenvalue of $A$. Let $f:\mathbb{R}\times\mathbb{R}^\vartheta\times\mathbb{R}\rightarrow \mathbb{R}$ be a regular function with a bifurcation condition associated to $\mu$ and such that $\dim(\ker(J_f^N)\cap\Delta_{\bowtie})=2$ and $k(N/\bowtie,\mu)=2$.

Let $A_{\bowtie}$ to be the adjacency matrix of $N/\bowtie$. Then $m=\dim(\ker(J_f^{N/\bowtie}))=\dim(\ker(J_f^N)\cap\Delta_{\bowtie})=2$ and $\mu$ is a semisimple eigenvalue of $A_{\bowtie}$.

Performing the calculations of \ref{sec:sbl} and of this section for the network $N/\bowtie$ and the regular function $f$, we consider the function $\Theta(\theta)$ given by
$$\Theta(\theta)= \cos(\theta) q_2(\cos(\theta),\sin(\theta))-\sin(\theta) q_1(\cos(\theta),\sin(\theta)),$$
where $Q_k(u_1,u_2)=(q_1(u_1,u_2),q_2(u_1,u_2))$ and $k=k(N/\bowtie,\mu)$. We look for solutions of $\Theta(\theta)=0$.

\noindent
$(i)$ Suppose that $\bowtie$ is $\mu$-submaximal of type $1$ with a $\mu$-simple component $\bowtie_1$. We denote by $\bowtie'_1$ the balanced coloring of $N/\bowtie$ that corresponds to the balanced coloring $\bowtie_1$ of $N$. Now, we return to the calculations of \ref{sec:sbl} for the network $N/\bowtie$ and we choose a basis $v_1, v_2$ of $\ker(J_f^{N/\bowtie})$ such that $v_1\in \Delta_{\bowtie'_1}$. 

Note that $d_{211}=0$, for the following reason. From \ref{prop:bifbraoddker} and $\dim(\ker(J_f^{N/\bowtie})\cap\Delta_{\bowtie'_1})=1$, there exists a bifurcation branch of $f$ on $N/\bowtie$ with synchrony $\bowtie'_1$. By \ref{prop:bifbrtozero}, $\tilde{h}(\pm 1,0)=0$. So $d_{211}=0$. (This can be also shown using the fact that $\frac{\partial^2 g_2}{\partial y_1\partial y_1}(0,0)=0$, since $\Delta_{\bowtie'_1}$ is invariant.) 

Using the expansion of $q_1$ and $q_2$ presented in \ref{eq:blowupdim2}, we have that 
$$\Theta(\theta)= \beta\left(u_1\left( d_{212}u_1u_2+\frac{d_{222}u_2^2}{2}\right)-u_2 \left(\frac{d_{111}u_1^2}{2}+ d_{112}u_1u_2+\frac{d_{122}u_2^2}{2}\right)\right),$$
where $u_1=\cos(\theta)$, $u_2=\sin(\theta)$ and $\beta=(f_{00}+2\mu f_{01}+\mu f_{11}-\mu f_{1\vartheta}+\mu^2 f_{1\vartheta})\neq0$ generically. We have that $\Theta(\theta)=0$, if $\sin(\theta)=0$, however those zeros correspond to the known bifurcation branch of $f$ on $N/\bowtie$ with synchrony $\bowtie'_1$. For $\sin(\theta)\neq 0 $, we have that 
$$\Theta(\theta)=0 \Leftrightarrow (2d_{212}-d_{111})\left(\frac{\cos(\theta)}{\sin(\theta)}\right)^2 + (d_{222} -2d_{112})\frac{\cos(\theta)}{\sin(\theta)} - d_{122} = 0.$$
Define $x=\cos(\theta)/\sin(\theta)$ and consider the equation 
\begin{equation}\label{eq:eqsubmaxtype1}
(2d_{212}-d_{111})x^2 + (d_{222} -2d_{112})x - d_{122} = 0
\end{equation} that has a real solution if and only if \ref{eq:condsubmaxtype1} holds.

If \ref{eq:condsubmaxtype1} holds, then there exists a solution $\tilde{x}$ of \ref{eq:eqsubmaxtype1}. Since the image of $\theta\mapsto \cos(\theta)/\sin(\theta)$, $\sin(\theta)\neq 0$ is the entire real line. There exists $\tilde{\theta}$ such that $\tilde{x}=\cos(\tilde{\theta})/\sin(\tilde{\theta})$ and $\Theta(\tilde{\theta})=0$. Consider $\tilde{u}=(\cos(\tilde{\theta}),\sin(\tilde{\theta}))$, then $\tilde{h}(\tilde{u})=0$. 
Assuming that \ref{eq:H1li} holds for $f$ and $N/\bowtie$, we know that \ref{eq:H2} holds for $\tilde{u}$. From \ref{prop:bifbrfromzero}, there exists a bifurcation branch of $f$ on $N/\bowtie$. This bifurcation branch has no synchrony in $N/\bowtie$, since $\bowtie$ is $\mu$-submaximal and $ \sin(\tilde{\theta})\neq 0$. Lifting this bifurcation branch to $N$, we obtain a bifurcation branch of $f$ on $N$ with exactly synchrony $\bowtie$.

Suppose by contradiction that there exists a bifurcation branch of $f$ on $N$ with exactly synchrony $\bowtie$ and \ref{eq:condsubmaxtype1} does not hold. We have by \ref{prop:bifbrtozero} that there exists $(u_1,u_2)\in S^{1}$ such that $u_2\neq 0$ and $\tilde{h}(u_1,u_2)=0$. So $x=u_1/u_2$ is a real solution of equation \ref{eq:eqsubmaxtype1}, which  is an absurd since we are supposing that \ref{eq:condsubmaxtype1} does not hold. This proves $(i)$.

\noindent
$(ii)$ Suppose that $\bowtie$ is $\mu$-submaximal of type $2$ with  $\mu$-simple components $\bowtie_1$ and $\bowtie_2$. We denote by $\bowtie'_1, \bowtie'_2$ the balanced colorings of $N/\bowtie$ that corresponds to the balanced colorings $\bowtie_1,\bowtie_2$ of $N$, respectively. In the calculations of \ref{sec:sbl} for the network $N/\bowtie$, we choose a basis $v_1, v_2$ of $\ker(J_f^{N/\bowtie})$ such that $v_1\in \Delta_{\bowtie'_1}$ and $v_2\in \Delta_{\bowtie'_2}$. 
 
As before, we note that $d_{211}=0$ and $d_{122}=0$. So
$$\Theta(\theta)= \beta\left(u_1u_2\left( d_{212}u_1+\frac{d_{222}u_2}{2}\right)-u_2u_1 \left(\frac{d_{111}u_1}{2}+ d_{112}u_2\right)\right),$$
where $u_1=\cos(\theta)$ and $u_2=\sin(\theta)$. We have that $\Theta(\theta)=0$, if $\sin(\theta)=0$  or $\cos(\theta)=0$, however those zeros correspond to the known bifurcation branch of $f$ on $N/\bowtie$ with synchrony $\bowtie'_1$ or $\bowtie'_2$. For $\cos(\theta),\sin(\theta)\neq 0$, we have that 
$$\Theta(\theta)=0 \Leftrightarrow  (2d_{212}- d_{111})\cos(\theta)=  (2d_{112}- d_{222})\sin(\theta).$$
has a solution such that $\cos(\theta),\sin(\theta)\neq 0$ if and only if  \ref{eq:condsubmaxtype2} holds.

If \ref{eq:condsubmaxtype2} holds, let $\tilde{\theta}$ be a solution of the equation above, i.e., $\Theta(\tilde{\theta})=0$. Assuming that condition~\ref{eq:H1li} holds for $f$ and $N/\bowtie$, we know that \ref{eq:H2} holds for  $\tilde{u}=(\cos(\tilde{\theta}),\sin(\tilde{\theta}))$. 
From \ref{prop:bifbrfromzero}, there exists a bifurcation branch of $f$ on $N/\bowtie$
which has no synchrony in $N/\bowtie$, since $\bowtie$ is $\mu$-submaximal and $\cos(\theta),\sin(\theta)\neq 0$. Lifting this bifurcation branch to $N$, we obtain a bifurcation branch of $f$ on $N$ with exactly synchrony $\bowtie$.

Suppose by contradiction that there exists a bifurcation branch of $f$ on $N$ with exactly synchrony $\bowtie$ and \ref{eq:condsubmaxtype2} does not hold. We have by \ref{prop:bifbrtozero} that there exists $(u_1,u_2)\in S^{1}$ such that $u_1,u_2\neq 0$ and $\tilde{h}(u_1,u_2)=0$. So $(2d_{212}- d_{111})u_1=  (2d_{112}- d_{222})u_2$ and $u_1,u_2\neq 0$. This is an absurd since $(2d_{212}- d_{111})(2d_{112}- d_{222})= 0$. This proves $(ii)$.
\end{proof}

Now, we study some examples where the trivial balanced coloring $\bowtie_{=}$ is maximal or submaximal. We see, in particular, that there are networks with similar lattice structures but different synchrony-breaking bifurcations.

\begin{exe}
Consider the network $N_1$ given by the adjacency matrix
$$A_1=\left[\begin{matrix}
343 & 430 & 86 & 129\\ 
377 & 453 & 77 & 81\\ 
47 & 214 & 166 & 561\\ 
432 & 494 & 62 & 0
\end{matrix}\right].$$
The eigenvalues of $A_1$ are the network valency $988$, $-24$ and $-1$ with multiplicity $1$, $1$ and $2$, respectively. They are semisimple. We consider the trivial balanced coloring $\bowtie_{=}$ and the bifurcations associated to the eigenvalue $\mu=-1$ of $A_1$. The network $N_1$ does not have any non-trivial balanced coloring. So the balanced coloring $\bowtie_=$ is maximal and is $(-1)$-maximal. 

Let $f:\mathbb{R}\times\mathbb{R}^{988}\times\mathbb{R}\rightarrow \mathbb{R}$ be a generic regular function with a bifurcation condition associated to $-1$, i.e., $f_0-f_1=0$. We have that $\dim(\ker(J_f^{N_1}))=2$. Let $v_1=(1,-1,1,0)$ and $v_2=(8,-7,0,2)$ be a basis of $\ker(J_f^{N_1})$. Let $(v^*_1,v^*_2)$ be a basis of $\ran( J_f^{N_1})^{\bot}$ such that $\langle v^*_i, v_j \rangle$ is $1$ if $i=j$ and $0$ otherwise, for $i,j=1,2$. Then $d_{abc}\neq 0$ for every $a,b,c=1,2$ and $k(N_1,\mu)=2$ is even.

By \ref{prop:bifbranchdim2max}, there is a bifurcation branch of $f$ on $N_1$ without synchrony, if condition~\ref{eq:H1} holds for $f$ and $N_1$. 
\end{exe}

\begin{exe}
Consider the network $N_2$ given by the adjacency matrix
$$A_2=\left[\begin{matrix}
2 & 0 & 0\\ 
0 & 2 & 0\\
1 & 1 & 0
\end{matrix}\right].$$
The eigenvalues of $A_2$ are the network valency $2$ and $0$ with multiplicity $2$ and $1$, respectively. They are semisimple. We  consider the trivial balanced coloring $\bowtie_{=}$ and bifurcations associated to the eigenvalue $2$ of $A_2$. The network $N_2$ has only one non-trivial balanced coloring: $\bowtie_1$ given by the classes: $\{\{1,2\},\{3\}\}$.
The balanced coloring $\bowtie_=$ is $2$-submaximal of type $1$, with $2$-simple component $\bowtie_1$. See \ref{fig:latbalcolorsubmaxtype1}.
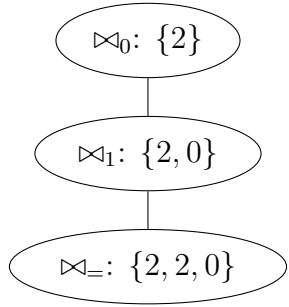
\begin{figure}[ht]
\center
\begin{tikzpicture}[node distance=0.5cm and 0.7cm]
\node (n1) [ellipse,draw]  {$\bowtie_{0}$: $\{2\}$};
\node (n2) [ellipse,draw]  [below=of n1] {$\bowtie_1$: $\{2,0\}$};
\node (n3) [ellipse,draw]  [below=of n2] {$\bowtie_{=}$: $\{2,2,0\}$};

\draw (n1) to (n2);
\draw (n2) to  (n3);
\end{tikzpicture}
\caption{Balanced colorings of $N_2$ and the eigenvalues of the adjacency matrices associated to the corresponding quotient networks.}
\label{fig:latbalcolorsubmaxtype1}
\end{figure}

Let $f:\mathbb{R}\times\mathbb{R}^2\times\mathbb{R}\rightarrow \mathbb{R}$ be a generic regular function with a bifurcation condition associated to $2$, i.e., $f_0+2f_1=0$. We have that $\dim(\ker(J_f^{N_2}))=2$. We choose a basis of $\ker(J_f^{N_2})$ such that $v_1\in \Delta_1$. Let $v_1=(1,1,1)\in \Delta_1$ and $v_2=(1,-1,0)$. Let $v^*_1=(1/2,1/2,0)$ and $v^*_2=(1/2,-1/2,0)$ that form a basis of $\ran( J_f^{N_2})^{\bot}$. Then
$$d_{112}=d_{211}=d_{222}=0,\quad \quad d_{111}=d_{122}=d_{212}=1.$$
Therefore $k(N_2,\mu)=2$ and \ref{eq:condsubmaxtype1} holds.

By explicit calculations, we can see that condition \ref{eq:H1li} holds whenever $f_{0\lambda}+2f_{1\lambda}\neq 0$ and $f_{00}+4f_{01}+2f_{11}+2f_{19}\neq 0$.
Then there is a bifurcation branch of $f$ on $N_2$ without synchrony, by \ref{prop:bifbranchdim2submax}~$(i)$. 
\end{exe}

\begin{exe}\label{exe:latbalcoldegdet2}
Consider the network \#29 of \cite{K09} (\ref{fig:net29}) which will be denoted by $N_{29}$ and has the adjacency matrix
$$A_{29}=\left[\begin{matrix}
0 & 0 & 0 & 2\\ 
0 & 0 & 0 & 2\\
0 & 1 & 1 & 0\\
0 & 1 & 1 & 0
\end{matrix}\right].$$
The eigenvalues of $A_{29}$ are the network valency $2$, $-1$ and $0$ with multiplicity $1$, $1$ and $2$, respectively, which are semisimple.
The network $N_{29}$ has four non-trivial balanced colorings $\bowtie_1=\{\{1,2\},\{3,4\}\}$, $\bowtie_2=\{\{1\},\{2,3,4\}\}$, $\bowtie_3=\{\{1,2\},\{3\},\{4\}\}$ and $\bowtie_4=\{\{1\},\{2\},\{3,4\}\}$.
The balanced coloring $\bowtie_=$ is $0$-submaximal of type $2$ with $0$-simple components $\bowtie_3$ and $\bowtie_4$. See \ref{fig:latbalcoloreige29}. 

\begin{figure}[ht]
\center
\begin{tikzpicture}[node distance=0.5cm and 0.7cm]
\node (n1) [ellipse,draw]  {$\bowtie_{0}$: $\{2\}$};
\node (n2) [ellipse,draw]  [below left=of n1] {$\bowtie_2$: $\{2,0\}$};
\node (n3) [ellipse,draw]  [below right=of n1] {$\bowtie_1$: $\{2,-1\}$};
\node (n4) [ellipse,draw]  [below=of n2] {$\bowtie_4$: $\{2,-1,0\}$};
\node (n5) [ellipse,draw]  [below=of n3] {$\bowtie_3$: $\{2,-1,0\}$};
\node (n6) [ellipse,draw]  [below right=of n4, xshift=-1cm] {$\bowtie_{=}$: $\{2,-1,0,0\}$};

\draw (n1) to  (n2);
\draw (n1) to  (n3);
\draw (n2) to  (n4);
\draw (n3) to  (n4);
\draw (n3) to  (n5);
\draw (n4) to  (n6);
\draw (n5) to  (n6);
\end{tikzpicture}
\caption{Balanced colorings of network $N_{29}$ and the eigenvalues of the adjacency matrices associated to the corresponding quotient networks.}
\label{fig:latbalcoloreige29}
\end{figure}
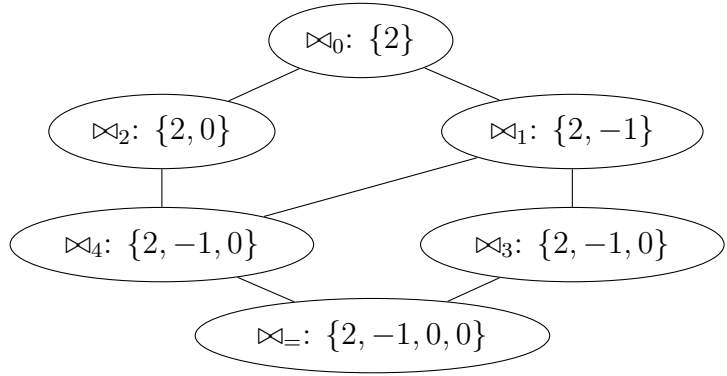

 We consider the trivial balanced coloring $\bowtie_{=}$. 
Let $f:\mathbb{R}\times\mathbb{R}^4\times\mathbb{R}\rightarrow \mathbb{R}$ be a generic regular function with a bifurcation condition associated to $0$, i.e., $f_0=0$. We have that $\dim(\ker(J_f^{N_{29}}))=2$. We choose a basis of $\ker(J_f^{N_{29}})$ such that $v_1\in \Delta_{\bowtie_3}$ and $v_2\in \Delta_{\bowtie_4}$. Let $v_1=(1, 1, -1, 0)\in \Delta_{\bowtie_3}$ and $v_2=(1, 0, 0, 0)\in \Delta_{\bowtie_4}$. Let $v^*_1=(0, 0, -1, 1)$ and $v^*_2=(1, -1, 0, 0)$ that form a basis of $\ran( J_f^{N_{29}})^{\bot}$. Then
$$d_{111}=-1,\quad d_{112}=d_{122}=d_{211}=0,\quad d_{212}=1,\quad d_{222}=1.$$
Therefore $k(N_{29},\mu)=2$ and \ref{eq:condsubmaxtype2} holds. 

By explicit calculations, we can see that condition \ref{eq:H1li} holds, whenever $f_{0\lambda}\neq 0$ and $f_{00}\neq 0$.
Then there is a bifurcation branch of $f$ on $N_{29}$ without synchrony, by \ref{prop:bifbranchdim2submax}~$(ii)$. 
\end{exe}

\begin{obs}
Note that the networks $N_{51}$ and $N_{29}$ share the same lattice structure. However, the network $N_{51}$ does not support a bifurcation branch without synchrony, \ref{exe:latbalcoldegdet}, and the network $N_{29}$ supports a bifurcation branch without synchrony, \ref{exe:latbalcoldegdet2}.
\end{obs}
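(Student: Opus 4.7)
The plan is to verify the observation by a direct comparison of the two examples already worked out, without any new machinery. The statement has two independent parts: (a) the lattices of balanced colorings of $N_{51}$ and $N_{29}$ are isomorphic as posets, and (b) the two networks differ on whether they support a bifurcation branch with exactly synchrony $\bowtie_{=}$ associated to the semisimple eigenvalue $0$.

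For part (a), I would simply inspect the Hasse diagrams in \ref{fig:latbalcoloreige51} and \ref{fig:latbalcoloreige29}. In both cases the lattice has six elements with the same covering relations: a top $\bowtie_0$ covering two elements of height~$2$ (one refining into two elements of height~$3$, the other into one), and the two height-$3$ elements both covering the trivial coloring $\bowtie_=$. The bijection sending $\bowtie_i^{N_{51}}\mapsto \bowtie_i^{N_{29}}$ (with $\bowtie_3$ and $\bowtie_4$ swapped as needed according to which height-$2$ coloring they refine) is a lattice isomorphism. No computation beyond reading the two figures is required.

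For part (b), the key point is that in both examples $\bowtie_{=}$ is $0$-submaximal of type $2$ with the same abstract role, but the coefficients $d_{i l_1 l_2}$ in \ref{eq:blowupdim2} differ. I would invoke the two worked examples: \ref{exe:latbalcoldegdet} computes, for $N_{51}$ with the chosen bases of $\ker(J_f^{N_{51}})$ and $\ran(J_f^{N_{51}})^{\perp}$, that every zero of the blown-up equation $h(u,0,\eta)=0$ lies on one of the coordinate axes $(\pm 1,0)$ or $(0,\pm 1)$, hence in $\Delta_{\bowtie_3}\cup\Delta_{\bowtie_4}$; by \ref{prop:bifbrtozero} there is therefore no bifurcation branch with exactly synchrony $\bowtie_{=}$. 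Equivalently, in the language of \ref{prop:bifbranchdim2submax}$(ii)$, one checks from the entries of $A_{51}$ that $(2d_{212}-d_{111})(2d_{112}-d_{222})=0$, so condition \ref{eq:condsubmaxtype2} fails. By contrast, \ref{exe:latbalcoldegdet2} verifies for $N_{29}$ that $(2d_{212}-d_{111})(2d_{112}-d_{222})\neq 0$, so \ref{eq:condsubmaxtype2} holds and \ref{prop:bifbranchdim2submax}$(ii)$ yields a bifurcation branch with exactly synchrony $\bowtie_{=}$.

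The only mildly delicate step is basis-independence: one must check that the vanishing or non-vanishing of $(2d_{212}-d_{111})(2d_{112}-d_{222})$ is intrinsic to the network once one insists (as in \ref{prop:bifbranchdim2submax}$(ii)$) that $v_1\in\Delta_{\bowtie'_1}$ and $v_2\in\Delta_{\bowtie'_2}$ and $\langle v_i^*,v_j\rangle=\delta_{ij}$; this is immediate because rescaling $v_1,v_2$ (and correspondingly $v_1^*,v_2^*$) scales each factor by a nonzero constant. With that remark, the conclusion of the observation is just the comparison of the two signs obtained in \ref{exe:latbalcoldegdet} and \ref{exe:latbalcoldegdet2}, and no further argument is needed.
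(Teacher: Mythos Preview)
Your proposal is correct and matches the paper's approach, which is simply to point to the two worked examples and the two Hasse diagrams; the remark in the paper carries no separate proof. Your additional remarks (the explicit lattice isomorphism, the check that \ref{eq:condsubmaxtype2} fails for $N_{51}$ because $d_{212}=d_{112}=d_{222}=0$ while $d_{111}=1$, and the basis-independence observation) are sound and go slightly beyond what the paper spells out, but they are consistent with it.
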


\section*{Acknowledgments}

I thank Manuela Aguiar and Ana Paula Dias  for helpful discussions and guidance throughout this work.

\end{document}